\newtheorem{theorem}{Theorem}
\newtheorem{lemma}{Lemma}
\newtheorem{proposition}{Proposition}
\newtheorem{corollary}{Corollary}
\newtheorem{claim}{Claim}
\newcommand{\f}[2]{\frac{#1}{#2}}
\newcommand{\dpr}[2]{\langle #1,#2 \rangle}
\newcommand{\al}{\alpha}
\newcommand{\ga}{\gamma}
\newcommand{\de}{\delta}
\newcommand{\la}{\lambda}
\newcommand{\si}{\sigma}
\newcommand{\rone}{\mathbf R}
\newcommand{\eps}{\epsilon}
\newcommand{\ca}{\mathcal A}
\newcommand{\p}{\partial}
\newcommand{\beq}{\begin{equation}}
\newcommand{\eeq}{\end{equation}}
\newcommand{\beqna}{\begin{eqnarray*}}
\newcommand{\eeqna}{\end{eqnarray*}}
\newcommand{\beqn}{\begin{equation*}}
\newcommand{\eeqn}{\end{equation*}}
\newcommand{\bp}{\begin{proof}}
\newcommand{\ep}{\end{proof}}
\newcommand{\bprop}{\begin{proposition}}
\newcommand{\eprop}{\end{proposition}}
\newcommand{\bt}{\begin{theorem}}
\newcommand{\et}{\end{theorem}}
\newcommand{\bex}{\begin{Example}}
\newcommand{\eex}{\end{Example}}
\newcommand{\bc}{\begin{corollary}}
\newcommand{\ec}{\end{corollary}}
\newcommand{\bcl}{\begin{claim}}
\newcommand{\ecl}{\end{claim}}
\newcommand{\bl}{\begin{lemma}}
\newcommand{\el}{\end{lemma}}
\begin{document}

\title
[Damped fractional Klein-Gordon equation]
{On the energy decay rates for the 1D damped fractional Klein-Gordon equation}

\author{Satbir Malhi, Milena Stanislavova}

 
 \address{Satbir Malhi, Department of Mathematics, University of Kansas,
 	1460 Jayhawk
 	Boulevard,  Lawrence KS 66045--7523}
 \email{smalhi@ku.edu}

\address{Milena Stanislavova, Department of Mathematics, University of Kansas,
1460 Jayhawk
Boulevard,  Lawrence KS 66045--7523}
\email{stanis@ku.edu}

\thanks{   
Stanislavova is partially supported by NSF-DMS, Applied Mathematics 
grant \# 1516245.}

\date{\today}

\subjclass[2000]{35B35, 35B40, 35G30}

\keywords{Damped wave equation, Fractional derivative,  geometric control condition}
\begin{abstract}
 We consider the fractional Klein-Gordon equation in one spatial dimension, subjected to a damping coefficient, which  is non-trivial and periodic, or more generally strictly positive on a periodic set. We show that the energy of the solution  decays at the  polynomial rate $O(t^{-\f{s}{4-2s}})$ for $0< s<2 $ and at some exponential rate when $s\geq 2$. Our  approach  is based on the asymptotic theory of $C_0$ semigroups in which one can  relate the  decay rate of the energy in terms of the resolvent growth of the semigroup generator. The main technical result is a new  observability estimate for the fractional Laplacian, which may be of independent interest. 
  
\end{abstract}
\maketitle
\section{Introduction}
In this paper, we  consider the energy decay of the following fractional damped Klein-Gordon equation
\begin{eqnarray}\label{eq1.1}
u_{tt}+\gamma(x)u_t+(-\p_{xx})^{s/2}u+m u=0, \ \ (t,x)\in \rone_+\times \rone, 
\end{eqnarray}
where $m>0$ and $\ga(x)\geq 0$ is bounded below by a positive constant on a $ 2\pi$-periodic  set. The parameter $\textit{s}$ refers to the fractional order of the spatial derivative and describes the fractional nature of the equation. Here and throughout,  $u(x,t)$ is generally a complex-valued function, and the pseudo-differential operator $(-\p_{xx})^{s/2}$ is defined through its Fourier multiplier 
$$\widehat{(-\p_{xx})^{s/2} f}(\xi) = |\xi|^s \hat{f}(\xi), ~\xi\in\mathbb{R}.$$
The function $\gamma(x)$ denotes the damping force, which travels with velocity $u_t$ and causes the loss of energy decay in the system. This energy decay is the main object of study in this article.

For the case $s=2$, the operator $-\p_{xx}$ denotes the positive Laplacian. In this case, (\ref{eq1.1})  reduces to the well know classical Damped Klein-Gordon equation. It has been studied extensively in the last decade by many authors. On bounded domains,  Bardos, Lebeau, Rauch, and Taylor in \cite{bardos1988exemple,bardos1992sharp,rauch1974exponential} proved exponential energy decay rate under the geometric control condition (GCC) in a sense that there exist $T,~\epsilon>0,$ such that $\int_0^T\gamma(x(t))~dt\geq \epsilon$ along every straight line unit speed trajectory. Whereas recently,  Burq and Joly \cite{doi:10.1142/S0219199716500127} extended these results to non-compact setting assuming additional smoothness on $\gamma(x)\in \mathcal{C}^{\infty}$. In the absence of GCC, several authors proved polynomial rate of decay in different setting (see\cite{anantharaman2012decay,Hitrik,stahn2017optimal,phung2007polynomial,wunsch2015periodic} and reference therein).

In fact for the case $0<s<2$ or $s>2$, to these authors knowledge there has been no rigorous study of the energy decay rate of the  damped Klein-Gordon equation in the fractional case. It  is our goal here to compute   the decay rate of the solution $u(x,t)$ of  \eqref{eq1.1} in terms of its fractional power $s$, while  the data is  smoother than the original energy space. This is  achieved under the assumption that  $\ga(x)$ is non-trivial and periodic, or more generally strictly positive on a periodic set.

We show that for low order fractional power $0<s<2$, the rate of decay is algebraic. This is in sharp contrast with the case $s\geq 2$,  where the solution has exponential rate of decay. So, it appears that $s=2$ is exactly a threshold value, which separates the algebraic from exponential rate of decay, but unfortunately our method does not address the optimality of this exponent. This remains an open question for future investigations. 

The main result of the paper is as follow.
\begin{theorem}\label{thm1.1}
	Let   $m>0$ and $0\leq \ga(x)\in L^{\infty}$ and that there exist $\epsilon >0$ and a $2\pi\mathbb{Z}$- invariant open set $\Omega\subset\mathbb{R}$ such that $\gamma(x)\geq \epsilon$ for a.e. $x\in \Omega$. Then there exists $C>0$ so that 
\begin{itemize}
	\item for $0<s<2$, we have
	\begin{equation}
	\label{100}
		\|(u(t), u_t(t))\|_{H^{s/2}\times L^2}\leq \frac{C}{1+t^{\f{s}{4-2s}}}\|(u(0), u_t(0))\|_{H^{s}\times H^{s/2}}.
	\end{equation} 

 \item for $s\geq 2$, there exists $\la_0>0$, so that 
	\begin{equation}
	\label{s2}
		\|(u(t), u_t(t))\|_{H^{s/2}\times L^2}\leq Ce^{-\lambda_0 t}\|(u(0), u_t(0))\|_{H^{s/2}\times L^2}.
	\end{equation} 
\end{itemize}	
\end{theorem}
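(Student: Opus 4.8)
The plan is to recast \eqref{eq1.1} as a first-order system and to deduce the energy decay from the growth of the resolvent of the generator on the imaginary axis, the size of that growth being dictated by a new observability inequality for the fractional Laplacian. Writing $A_s:=(-\p_{xx})^{s/2}$ and $U=(u,u_t)^{T}$, the equation becomes $U_t=\mathcal{A}U$ on the energy space $\mathcal{H}=H^{s/2}\times L^2$, where
\[
\mathcal{A}=\begin{pmatrix}0 & I\\ -(A_s+m) & -\ga\end{pmatrix},\qquad D(\mathcal{A})=H^{s}\times H^{s/2}.
\]
Endow $\mathcal{H}$ with the energy norm $\|U\|_{\mathcal{H}}^2=\|A_s^{1/2}u\|_{L^2}^2+m\|u\|_{L^2}^2+\|u_t\|_{L^2}^2$. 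The dissipation identity $\tfrac{d}{dt}\|U\|_{\mathcal{H}}^2=-2\int_{\rone}\ga|u_t|^2\,dx\le0$ shows $\mathcal{A}$ is dissipative, and maximality (solvability of $(I-\mathcal{A})U=F$) is routine, so by Lumer--Phillips $\mathcal{A}$ generates a contraction $C_0$-semigroup $e^{t\mathcal{A}}$.

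Next I would check that $i\rone\subset\rho(\mathcal{A})$. If $\mathcal{A}U=i\la U$ then $u_t=i\la u$ and the dissipation identity forces $\int\ga|u_t|^2=0$; hence $u=0$ wherever $\ga>0$ (the case $\la=0$ being immediate since $A_s+m$ is invertible), so $\ga u\equiv0$ and $u$ solves $(A_s+m-\la^2)u=0$ on $\rone$. On the Fourier side $(|\xi|^s+m-\la^2)\wh u=0$ confines $\wh u$ to the two points $|\xi|=\xi_\la:=(\la^2-m)^{1/s}$, which carries no nonzero $L^2$ function, whence $U=0$. Thus $\mathcal{A}$ has no eigenvalues on $i\rone$; surjectivity of $i\la-\mathcal{A}$ then follows from the a priori resolvent estimate established below (which forces the range to be closed) together with this injectivity.

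The heart of the argument is the resolvent of $\mathcal{A}$ on the imaginary axis. Eliminating $u_t$ from $(i\la-\mathcal{A})U=(f,g)^{T}$ reduces the problem to the scalar Helmholtz-type equation
\[
\big(A_s+m-\la^2+i\la\ga\big)\,u=g+(i\la+\ga)f,
\]
and pairing this identity with $u$ (the imaginary part producing the dissipated quantity $\la\int_\Omega\ga|u|^2$) shows that $\|(i\la-\mathcal{A})^{-1}\|$ is governed by a stationary observability inequality for $A_s+m-\la^2$ damped by $\ga$. To prove such an inequality I would decompose $u$ in frequency relative to the characteristic shell $|\xi|\approx\xi_\la$. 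Off the shell the symbol $|\xi|^s+m-\la^2$ is elliptic and controls $u$ without loss; on the shell one linearizes $|\xi|^s+m-\la^2\approx s\,\xi_\la^{s-1}(|\xi|-\xi_\la)$, and the slope $s\,\xi_\la^{s-1}\sim\la^{2(s-1)/s}$ is precisely the group velocity of the associated wave packets. For $0<s<2$ this velocity \emph{degenerates} like $\la^{(s-2)/s}$, so such packets need a time $T_\la\sim\la^{(2-s)/s}$ to traverse one period and meet the region where $\ga\ge\epsilon$; the resulting resolvent loss is of order $T_\la^2\sim\la^{(4-2s)/s}$. Turning this heuristic into a rigorous estimate --- using the $2\pi$-periodicity of $\ga$ and its positivity on $\Omega$ to rule out concentration away from the damping, while extracting exactly the power $\la^{(4-2s)/s}$ --- is the main technical result and the step I expect to be hardest.

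With the resolvent bound in hand the decay follows from semigroup asymptotics. For $0<s<2$ the estimate reads $\|(i\la-\mathcal{A})^{-1}\|_{\mathcal{L}(\mathcal{H})}=O\big(|\la|^{(4-2s)/s}\big)$; since $e^{t\mathcal{A}}$ is a bounded (contraction) semigroup with $i\rone\subset\rho(\mathcal{A})$, the Borichev--Tomilov theorem gives $\|e^{t\mathcal{A}}U_0\|_{\mathcal{H}}\le C\,t^{-s/(4-2s)}\|U_0\|_{D(\mathcal{A})}$, and as $D(\mathcal{A})=H^{s}\times H^{s/2}$ this is exactly \eqref{100}. For $s\ge2$ the group velocity on the shell no longer degenerates (it is bounded below), so the same observability argument delivers a uniform bound $\sup_{\la\in\rone}\|(i\la-\mathcal{A})^{-1}\|_{\mathcal{L}(\mathcal{H})}<\infty$; by the Gearhart--Prüss theorem this is equivalent to exponential stability of $e^{t\mathcal{A}}$ on $\mathcal{H}$, which is \eqref{s2}.
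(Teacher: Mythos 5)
Your overall architecture coincides with the paper's: recast the equation as $U_t=\mathcal{A}U$ on $H^{s/2}\times L^2$, reduce $(i\lambda-\mathcal{A})^{-1}$ to the scalar operator $((-\p_{xx})^{s/2}+m+i\lambda\gamma-\lambda^2)^{-1}$, prove a polynomial resolvent bound of order $|\lambda|^{(4-2s)/s}$ for $0<s<2$ and a uniform bound for $s\ge2$, and invoke Borichev--Tomilov, respectively Gearhart--Pr\"uss. The exponents you predict are exactly those the paper obtains, and the final bookkeeping ($1/\alpha=s/(4-2s)$, $D(\mathcal{A})=H^s\times H^{s/2}$) is correct.

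However, there is a genuine gap at precisely the point you flag yourself: the stationary observability inequality
\[
\|u\|_{L^2(\mathbf{R})}\leq C\bigl(\langle\lambda\rangle^{\frac{1}{s}-1}\|f\|_{L^2(\mathbf{R})}+\|u\|_{L^2(\Omega)}\bigr),\qquad ((-\p_{xx})^{s/2}-\lambda)u=f,
\]
is the paper's main technical result, and your proposal replaces its proof by a group-velocity heuristic (``packets on the shell $|\xi|\approx\lambda^{1/s}$ travel with speed $\sim\lambda^{(s-2)/s}$, hence need time $T_\lambda\sim\lambda^{(2-s)/s}$ to reach the damping, hence the loss is $T_\lambda^2$''). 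The heuristic predicts the right power but is not a proof: the difficulty is exactly to show that a quasimode concentrated on the characteristic shell cannot avoid a $2\pi\mathbb{Z}$-invariant open set, and an ``elliptic off the shell, linearize on the shell'' decomposition by itself does not use the periodicity of $\Omega$ at all (on all of $\mathbf{R}$, without periodicity, no such estimate can hold). The paper closes this gap in three concrete steps: (i) on $[-1,1]$ it expands $u,f$ in Fourier (cosine/sine) series, isolates the single resonant mode $k_0$ closest to $\sigma=\lambda^{1/s}$, uses the elementary inequality $|k^s-\sigma^s|\sim|k-\sigma|\max(k,\sigma)^{s-1}$ to bound the non-resonant part by $\langle\lambda\rangle^{1/s-1}\|f\|$, and controls the resonant amplitude $|u_{k_0}|$ by $\|u\|_{L^2[-\delta,\delta]}$ via an explicit lower bound on $\int_{-\delta}^{\delta}\cos^2(\pi k_0x)\,dx$ (resp.\ $\sin^2$); (ii) it transfers this to the shifted operators $H^s_\alpha=e^{i\alpha\cdot}(-\p_{xx})^{s/2}e^{-i\alpha\cdot}$ on the torus with constants uniform in $\alpha$, using Miller's equivalence between resolvent observability and Schr\"odinger observability; (iii) it passes to $\mathbf{R}$ by Wunsch's periodization identity $\|g\|_{L^2(\mathbf{R})}^2=\int_{[0,1)}\|\Pi_\alpha g\|_{L^2(\mathbb{T})}^2\,d\alpha$. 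If you want to keep your phase-space decomposition instead, you would still need a quantitative unique-continuation/non-concentration argument on the shell that exploits the $2\pi\mathbb{Z}$-invariance of $\Omega$; as written, that step is asserted rather than proved, and with it the entire resolvent bound --- and hence both decay rates --- remains unestablished.
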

The proof of Theorem \ref{thm1.1} is based on the semigroup technique used in \cite{wunsch2015periodic,Hitrik,milena,Latushkin}, in which rather than estimating  norm of the solution directly, we used the following two classical results. Gearhart-Pr\"uss Theorem \cite{gearhart1978spectral,prss1984spectrum} and Borichev-Tomilov Theorem  in \cite{yuri} make it possible to deduce sharp rates of energy decay from appropriate growth bounds on the norm of the resolvent of  the semigroup`s generator. 

Let us state precisely these two results, namely  Gearhart-Pr\"uss and Borichev-Tomilov theorems, 
which allow us to compute the rates specified in Theorem \ref{thm1.1}. 
The Gearhart-Pr\"uss theorem provides a necessary and sufficient 
criteria for negative growth bounds for a given  semigroup, in terms of the natural spectral condition $i\mathbb{R} \subset \rho(\ca)$ and appropriate resolvent bounds for the generator. We use here the characterization of  Gearhart-Pr\"uss theorem given by Huang in \cite{huang1985characteristic}.
\begin{theorem}[Gearhart-Pr\"uss]
	\label{Gearhart}
	
	Let $e^{t\ca }$ be a $C_0$-semigroup in a Hilbert space $H$ and assume that there exists a positive constant $M>0$ such that $\|e^{t\ca }\|\leq M$ for all $t\geq 0$. Let $\mu\in \rho(\ca)$. Then the following are equivalent. 
	\begin{itemize}
		\item There exists $\la_0>0$ and $C$, so that 
		$$
		\|T(t)\|_{B(H)}\leq C e^{-\la_0 t}
		$$
		\item $i\mathbb{R} \subset \rho(\ca)$ and 
		\begin{eqnarray}\label{eqn8.1.3}
		\sup\limits_{k\in\mathbb{R}}\|(\ca-i kI)^{-1}\|_{B(H)}<+\infty.
		\end{eqnarray}
	\end{itemize}
	
\end{theorem}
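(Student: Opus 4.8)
The statement is an equivalence, so I would prove the two implications separately; the first is routine and the second carries all the weight and is where the Hilbert-space hypothesis is indispensable. Throughout write $T(t)=e^{t\ca}$, and when the second condition holds set $M_0:=\sup_{k\in\rone}\|(ik-\ca)^{-1}\|_{B(H)}$.

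For the implication "exponential decay $\Rightarrow$ spectral and resolvent bound", I would simply Laplace-transform the semigroup. If $\|T(t)\|\le Ce^{-\la_0 t}$, then the growth bound of the semigroup is at most $-\la_0$, so $\{\Re\la>-\la_0\}\subset\rho(\ca)$ and in particular $i\rone\subset\rho(\ca)$; moreover $(\la-\ca)^{-1}=\int_0^\infty e^{-\la t}T(t)\,dt$ on this half-plane. Taking $\la=ik$ and estimating under the integral gives $\|(ik-\ca)^{-1}\|\le\int_0^\infty Ce^{-\la_0 t}\,dt=C/\la_0$, uniformly in $k$, which is exactly \eqref{eqn8.1.3}.

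The substantial direction is the converse. Assuming $i\rone\subset\rho(\ca)$ and $M_0<\infty$, I would proceed in three steps. First, I would upgrade the bound on the imaginary axis to a uniform bound on a closed half-plane $\{\Re\la\ge-\delta\}$: a Neumann-series (analytic perturbation) argument centered at the points $ik$ shows $(\la-\ca)^{-1}$ exists and is bounded by $2M_0$ whenever $|\Re\la|<\delta:=1/(2M_0)$, while for $\Re\la\ge\delta$ the elementary bound $\|(\la-\ca)^{-1}\|\le M/\Re\la$ coming from $\|T(t)\|\le M$ applies; together these yield $\sup_{\Re\la\ge-\delta}\|(\la-\ca)^{-1}\|=:K<\infty$. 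Second, and this is the heart of the matter, I would invoke Plancherel's theorem in $L^2(\rone;H)$. For $\eta>0$ and $x\in H$ the function $t\mapsto e^{-\eta t}T(t)x$ lies in $L^2(0,\infty;H)$ with Fourier transform $k\mapsto(\eta+ik-\ca)^{-1}x$, so
\begin{equation*}
2\pi\int_0^\infty e^{-2\eta t}\|T(t)x\|^2\,dt=\int_{-\infty}^\infty\|(\eta+ik-\ca)^{-1}x\|^2\,dk .
\end{equation*}
Running the same identity for the adjoint semigroup $T(t)^*$ and combining the two with the uniform bound $K$ and a closed-graph (Datko-type) argument to promote finiteness of each orbit integral into a uniform estimate, I would conclude that there is a constant $C$ with $\int_0^\infty\|T(t)x\|^2\,dt\le C\|x\|^2$ for every $x\in H$. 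Third, I would apply the Datko--Pazy lemma: for a bounded $C_0$-semigroup, uniform square-integrability of the orbits forces uniform exponential decay, and tracking constants yields an explicit rate $\la_0$ comparable to $\delta\sim 1/M_0$, which is the first condition.

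I expect the second step to be the real obstacle. The pointwise resolvent bound is a constant in $k$, which is not square-integrable, so one cannot naively estimate $\int_{\rone}\|(\eta+ik-\ca)^{-1}x\|^2\,dk$ by the supremum; the finiteness must come from the genuine decay $\|(ik-\ca)^{-1}x\|\to0$ as $|k|\to\infty$ (clear for $x\in D(\ca)$ from the identity $(ik-\ca)^{-1}x=\tfrac{1}{ik}x+\tfrac{1}{ik}(ik-\ca)^{-1}\ca x$, then extended by density) coupled with the Plancherel isometry and the duality step that supplies a constant independent of $x$. It is precisely this use of the $L^2$-isometry that makes the Hilbert-space setting essential and explains why the analogous statement can fail on a general Banach space.
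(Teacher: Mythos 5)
The paper does not prove Theorem \ref{Gearhart}: it is quoted as a classical result, with the proof left to the cited works of Gearhart, Pr\"uss and Huang, so there is no in-paper argument to compare yours against. Your outline is the standard one (Laplace transform for the easy direction; Neumann-series extension of the resolvent to a half-plane, Plancherel in $L^2(\mathbb{R};H)$, and Datko's theorem for the converse), and it is correct in structure.

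The one step that needs repair is precisely the one you flag. Decay of $\|(ik-\ca)^{-1}x\|$ for $x\in D(\ca)$ plus density does not yield $\int_{\mathbb{R}}\|(\eta+ik-\ca)^{-1}x\|^2\,dk\le C\|x\|^2$: the identity $(ik-\ca)^{-1}x=\tfrac{1}{ik}x+\tfrac{1}{ik}(ik-\ca)^{-1}\ca x$ only bounds that integral by $C(\|x\|+\|\ca x\|)^2$, and density cannot convert a $D(\ca)$-norm bound into an $H$-norm bound (nor does Datko's theorem apply when square-integrability of orbits is known only on a dense subspace). The standard fix is the resolvent identity
\begin{equation*}
(\eta+ik-\ca)^{-1}=\bigl(I+(\eta_0-\eta)(\eta+ik-\ca)^{-1}\bigr)(\eta_0+ik-\ca)^{-1},
\end{equation*}
with $\eta_0>0$ fixed and large: Plancherel together with $\|e^{t\ca}\|\le M$ gives $\int_{\mathbb{R}}\|(\eta_0+ik-\ca)^{-1}x\|^2\,dk\le \pi M^2\|x\|^2/\eta_0$, and your uniform resolvent bound on $\{\operatorname{Re}\la\ge-\delta\}$ transfers this to every $\eta\in(-\delta,\eta_0]$ with a constant independent of $\eta$ and of $x$. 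Letting $\eta\downarrow 0$ and using monotone convergence gives $\int_0^\infty\|e^{t\ca}x\|^2\,dt\le C\|x\|^2$ for all $x\in H$, after which Datko finishes; alternatively, taking $\eta=-\delta/2$ and pairing with the same estimate for the adjoint semigroup in the representation $t\,e^{t\ca}=\frac{1}{2\pi i}\int e^{\la t}(\la-\ca)^{-2}\,d\la$ yields the exponential rate directly. With that substitution your outline closes.
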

For semigroups lacking the aforementioned resolvent bounds, but still satisfying the natural spectral condition $i\mathbb{R} \subset \rho(\ca)$, one can still establish algebraic rates, by showing that the resolvent satisfies a  power growth. 
\begin{theorem}[Borichev-Tomilov]
	\label{mainthm1}
	Let $T(t)$ be a bounded $C_0$-semigroup on a Hilbert space $H$ with generator $A$ such that $i\mathbb{R}\subset \rho(A)$. Then for a fixed $\alpha >0$, 
	\begin{align*}
	\|R(i k,A)\|=O(|k|^\alpha),~|k| \longrightarrow\infty.
	\end{align*}
	implies
	\begin{align*}
	\|T(t)A^{-1}\|=O\left(\frac{1}{t^{1/\alpha}}\right),~t\longrightarrow\infty.
	\end{align*}
\end{theorem}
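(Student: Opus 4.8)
The plan is to establish the implication ``resolvent growth $\Rightarrow$ decay'' by recovering the semigroup from the resolvent through an inverse Laplace transform, moving the contour onto the imaginary axis (which is legitimate precisely because $i\mathbb{R}\subset\rho(A)$), and exploiting the Hilbert space structure through Plancherel's theorem. First I would record two structural facts. Since $i\mathbb{R}\subset\rho(A)$ we have $0\in\rho(A)$, so $A^{-1}\in B(H)$ and the factor $A^{-1}$ in the conclusion is meaningful; and because $T(t)$ is bounded, only the regime $t\to\infty$ matters. By the Laplace representation $R(\lambda,A)y=\int_0^\infty e^{-\lambda t}T(t)y\,dt$, valid for $\operatorname{Re}\lambda>0$, the resolvent along a vertical line $\lambda=\epsilon+is$ is exactly the Fourier transform in $s$ of the weighted orbit $t\mapsto e^{-\epsilon t}T(t)y\,\mathbf{1}_{t>0}$. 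This is the bridge between time decay and resolvent bounds, and Plancherel on this line is the step that will be unavailable in a general Banach space.

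The elementary engine is the resolvent identity with second point $0$, which gives
\[
R(is,A)A^{-1}=\tfrac{1}{is}\bigl(R(is,A)+A^{-1}\bigr),
\]
so that the factor $A^{-1}$ turns the hypothesis $\|R(is,A)\|=O(|s|^{\alpha})$ into a genuine gain of one power of $|s|^{-1}$. Iterating, $\|R(is,A)A^{-N}x\|=O(|s|^{\,\max(\alpha-N,-1)})\|x\|$, so choosing $N>\alpha+\tfrac12$ makes $s\mapsto\|R(is,A)A^{-N}x\|$ square integrable at infinity. To displace the contour I would use the Neumann series $R(\epsilon+is,A)=R(is,A)\sum_{n\ge0}(-\epsilon R(is,A))^n$, which converges and yields $\|R(\epsilon+is,A)\|\le 2\|R(is,A)\|$ whenever $\epsilon\|R(is,A)\|\le\tfrac12$, i.e.\ on the range $|s|\lesssim\epsilon^{-1/\alpha}$; for larger $|s|$ the crude bound $\|R(\epsilon+is,A)\|\le M/\epsilon$ from the Laplace representation is available. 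This fixes the natural scaling between the contour offset $\epsilon$ and the cutoff frequency $\epsilon^{-1/\alpha}$.

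With $N$ so chosen, Plancherel's theorem gives the exact identity
\[
\int_0^\infty e^{-2\epsilon t}\,\|T(t)A^{-N}x\|^2\,dt=\tfrac{1}{2\pi}\int_{\mathbb{R}}\|R(\epsilon+is,A)A^{-N}x\|^2\,ds,
\]
whose right-hand side I would control by splitting the frequency integral at $|s|\sim\epsilon^{-1/\alpha}$ — applying the polynomial bound on the low range and $M/\epsilon$ on the high range — and then optimizing in $\epsilon$ for each fixed $t$. Together with the uniform bound $\|T(t)\|\le M$ and strong continuity, this upgrades the weighted $L^2$-in-time estimate to a pointwise decay rate for the heavily smoothed orbit $\|T(t)A^{-N}\|$.

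The main obstacle — and the reason the Hilbert space hypothesis is essential — is to descend from the comfortable exponent $A^{-N}$ back to the single $A^{-1}$ while producing the \emph{sharp} rate $t^{-1/\alpha}$ with \emph{no} logarithmic loss. A purely soft contour estimate (the Banach space argument of Batty--Duyckaerts) only gives $t^{-1/\alpha}$ up to a logarithmic factor, and it is exactly the clean $L^2$ identity above that removes it. To transfer the rate from $j=N$ down to $j=1$ I would interpolate the decay of $\|T(t)A^{-j}\|$ against the uniform bound using Landau--Kolmogorov type moment inequalities for the powers $A^{-j}$, $0\le j\le N$, and then verify that the optimization in $\epsilon$ balances to the precise power $1/\alpha$. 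Making this interpolation quantitative, and checking that the exponents match exactly rather than up to an arbitrarily small loss, is the technical heart of the argument; the remainder is the standard machinery of Laplace inversion and Plancherel.
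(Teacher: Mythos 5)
A preliminary remark: the paper does not prove this theorem at all --- it is quoted as a black box from Borichev and Tomilov \cite{yuri} --- so your attempt must be measured against the published proof rather than anything in this manuscript. Your architecture assembles genuinely relevant ingredients: the Laplace representation of the resolvent, the Neumann-series contour shift with the cutoff $|s|\sim\epsilon^{-1/\alpha}$, Plancherel as the specifically Hilbertian tool, the identity $R(is,A)A^{-1}=\tfrac{1}{is}\bigl(R(is,A)+A^{-1}\bigr)$ to gain powers of $|s|^{-1}$, and the moment inequality for fractional powers of $-A$. Your remark that the Banach-space argument of Batty--Duyckaerts loses a logarithm is also accurate.

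However, the step you describe as ``upgrading the weighted $L^2$-in-time estimate to a pointwise decay rate'' contains a fatal, not merely technical, gap. From the identity
\begin{equation*}
\int_0^\infty e^{-2\epsilon t}\,\|T(t)A^{-N}x\|^2\,dt=\frac{1}{2\pi}\int_{\mathbb{R}}\|R(\epsilon+is,A)A^{-N}x\|^2\,ds=:\Phi(\epsilon),
\end{equation*}
the only route to a pointwise bound is the monotonicity $\|T(t)y\|\le M\|T(\tau)y\|$ for $\tau\le t$, which gives $\|T(t)A^{-N}x\|^2\le \frac{2\epsilon M^2}{1-e^{-2\epsilon t}}\,\Phi(\epsilon)$; since $\Phi(\epsilon)$ converges to the \emph{positive} limit $\frac{1}{2\pi}\int_{\mathbb{R}}\|R(is,A)A^{-N}x\|^2\,ds$ as $\epsilon\to0^+$ rather than decaying, optimizing in $\epsilon$ yields at best $\|T(t)A^{-N}\|=O(t^{-1/2})$, for every $N$. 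Meanwhile your moment inequality transfers decay \emph{downward with a loss}: $\|T(t)A^{-1}\|\le CM^{1-1/N}\|T(t)A^{-N}\|^{1/N}$, so to conclude the rate $t^{-1/\alpha}$ you would need $\|T(t)A^{-N}\|=O(t^{-N/\alpha})$ with $N>\alpha+\tfrac12$, i.e.\ a rate strictly faster than $t^{-1}$ --- far beyond the $t^{-1/2}$ ceiling your Plancherel step can ever deliver. So the scheme cannot close in any regime of $\alpha$. The actual proof must couple the number of resolvent factors to the time scale (e.g.\ working with $t^kT(t)$, whose transform involves $R(\cdot,A)^{k+1}$, and an iteration that builds up the exponent); this is exactly the part you defer as ``the technical heart,'' and it is where the theorem actually lives.
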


The paper is set out as follows. 
In Section 2, we prove the observability estimate for the fractional Laplacian. Using this, we derive a resolvent estimate for our damped problem.  In Section 3, we turn to the main resolvent bounds. We first establish an upper bound for  norm of the resolvent operator along the imaginary axis via  the fractional observability estimate. 
Later, at the end of section 3 , we apply the Gearhart-Pr\"uss Theorem and  Borichev-Tomilov results respectively to deduce from these resolvent bounds an estimate for the rate of energy decay of smooth solutions.

\section{Observability Estimates} 
We start with a few preliminary notations. 
\subsection{Function spaces, Fourier transforms, symbols} 
The spaces $L^p(\rone), 1\leq p\leq \infty$ are defined in a standard way. The Fourier transform for us will be given by 
$$
\hat{f}(\xi)=\f{1}{\sqrt{2\pi}} \int_{-\infty}^{\infty} f(x) e^{- i x \xi} dx, \ \ f(x)=\f{1}{\sqrt{2\pi}} 
\int_{-\infty}^\infty \hat{f}(\xi) e^{ i x \xi} d\xi. 
$$
The operator $-\p_{xx}$ can be realized as $\widehat{-\p_{xx} f}(\xi)=  \xi^2 \hat{f}(\xi)$. For any $s>0$, one can write $\widehat{(-\p_{xx})^{s/2} f}(\xi) = |\xi|^s \hat{f}(\xi)$. 

The fractional Sobolev spaces $H^s(\rone)$ can be identified as the   set of all functions $f$, so that $[(-\p_{xx})^{s/2}+1]f\in L^2(\rone)$. Alternatively, the norm is defined as follows 
$$
\|f\|_{H^s(\rone)}^2= \int_{-\infty}^\infty (1+\xi^2)^s |\hat{f}(\xi)|^2 d\xi<\infty.
$$
For periodic functions defined on  $[-1,1]$, which are sufficiently smooth, there is the usual Fourier series representation 
$$
f=\sum_k f_k e^{ik \pi x}, f_k=\f{1}{2} \int_{-1}^1 f(x) e^{- i k \pi  x} dx, 
$$
with $\|f\|_{L^2[-1,1]}^2= 2 \sum_k |f_k|^2$.  
The fractional operator $(-\p_{xx})^{s/2}$ using functional calculus is defined through 
\[
(-\p_{xx})^{s/2}f=\sum_{k=-\infty}^{\infty}(\pi |k|)^{s} f_ke^{ i k \pi x},
\]
for sufficiently smooth functions $f\in L^2[-1,1]$. 

The observability estimate for $s=2$ has been  proved by Burq and Zworski  in \cite{burq2005bouncing} on a two-dimensional compact manifold. Recently,  Wunsch \cite{wunsch2015periodic} extended these estimates to $\mathbb{R}^n$ under a periodic setting. In this note we prove similar observability estimate for the fractional case. In fact, in  the case of one-dimension our estimate contains an additional decay factor, which helps us to improve Wunsch`s results in the one-dimensional setting.

\subsection{Main observability lemma for the fractional Laplacian}
The following estimate, which may be of interest in its own right, gives $L^2$ control of the resolvent of the free Laplacian on its spectra, modulo an error term. 
\begin{theorem}
	\label{theo:2}
	Let $s>0$, $\lambda\geq 1$ and $\Omega\subset \mathbb{R}$ be a non-empty, $2\pi \mathbb{Z}$ invariant open set. For all
	$\la\in \mathbb{R}$, let  $
	((-\p_{xx})^{s/2}-\lambda) u = f$.
	Then, there exists $C$, so that 
	\begin{equation}\label{eqn:20}
	\|u\|_{L^2}\leq C( <\lambda>^{\f{1}{s}-1} \|f\|_{L^2} +\|u\|_{L^2(\Omega)}).
	\end{equation} 
\end{theorem}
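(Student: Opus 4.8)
The plan is to analyze the equation in frequency, separating the characteristic set $\{\,|\xi|=\lambda^{1/s}\,\}$, where the symbol $|\xi|^s-\lambda$ vanishes, from the elliptic region. The guiding heuristic is that the group velocity at the characteristic frequencies is $\frac{d}{d\xi}|\xi|^s\big|_{\xi=\lambda^{1/s}}=s\lambda^{1-1/s}$, and the weight $\langle\lambda\rangle^{1/s-1}$ in \eqref{eqn:20} is exactly its reciprocal; so one expects the forcing to contribute $\sim(\text{group velocity})^{-1}\|f\|_{L^2}$, while the loss of ellipticity at $\pm\lambda^{1/s}$ is compensated by observing $u$ on the periodic set $\Omega$. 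Concretely I would fix a smooth cutoff and write $u=u_{near}+u_{far}$, where $\widehat{u_{far}}$ is supported in $\{\,\big|\,|\xi|-\lambda^{1/s}\big|\ge 1\,\}$ and $\widehat{u_{near}}$ in unit neighborhoods of $\pm\lambda^{1/s}$; since these cutoffs are Fourier multipliers they commute with $(-\p_{xx})^{s/2}$.

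The far part is handled by an elliptic estimate. A one-variable convexity/monotonicity computation shows that on $\{\big|\,|\xi|-\lambda^{1/s}\big|\ge1\}$ one has $\big|\,|\xi|^s-\lambda\big|\ge c\,\lambda^{1-1/s}$ uniformly for $\lambda\ge1$, the minimum being attained at distance $1$ from $\pm\lambda^{1/s}$ where $|\xi|^s-\lambda\approx s\lambda^{1-1/s}$. Dividing by the symbol gives $\|u_{far}\|_{L^2}\le C\lambda^{1/s-1}\|f\|_{L^2}$, which already produces the $\langle\lambda\rangle^{1/s-1}\|f\|_{L^2}$ term with the correct power and, in the contradiction scheme below, forces all the mass to concentrate near the characteristic frequencies.

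The core is the near part, which I would treat after a Floquet--Bloch decomposition $L^2(\mathbb R)=\int^{\oplus}L^2(\mathbb T)\,d\theta$ adapted to the period $2\pi$. Since $|\xi|^s$ is a Fourier multiplier it acts fiberwise as the diagonal symbol $|k+\theta|^s$, and the $2\pi\mathbb Z$-invariance of $\Omega$ makes $\omega=\Omega\cap\mathbb T$ a fixed nonempty open subset of $\mathbb T$, independent of the fiber. Matters thus reduce to one estimate on the torus, uniform in $\theta$ and in $\lambda\ge1$: $\|w\|_{L^2(\mathbb T)}\le C\big(\lambda^{1/s-1}\|(|D_x+\theta|^s-\lambda)w\|_{L^2(\mathbb T)}+\|w\|_{L^2(\omega)}\big)$, where $D_x=\tfrac1i\p_x$ has integer spectrum. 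On the fiber the characteristic modes are the integers $k$ with $k+\theta\approx\pm\lambda^{1/s}$; modulating by the nearest such exponential recenters them at $k=0$, and Taylor expanding $|k+\theta|^s$ shows the operator behaves on low modes like $s\lambda^{1-1/s}(D_x+\delta)$ with $\delta\in(-\tfrac12,\tfrac12]$ the lattice offset, a transport operator whose inversion again carries the prefactor $\lambda^{1/s-1}$.

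I expect the main obstacle to be the uniformity as $\lambda\to\infty$, which I would resolve by contradiction. If the fiber estimate fails there are $w_n,\theta_n,\lambda_n$ with $\|w_n\|_{L^2(\mathbb T)}=1$ while $\lambda_n^{1/s-1}\|(|D_x+\theta_n|^s-\lambda_n)w_n\|+\|w_n\|_{L^2(\omega)}\to0$. When $\lambda_n$ stays bounded the resolvent of $|D_x+\theta|^s$ is compact on $\mathbb T$, so $w_n$ converges to a genuine eigenfunction, a trigonometric polynomial vanishing on $\omega$ and hence $0$, contradicting $\|w_n\|=1$. When $\lambda_n\to\infty$ the elliptic bound confines the mass of $w_n$ to bounded neighborhoods of the two characteristic modes near $\pm\lambda_n^{1/s}$; demodulating, the two pieces $\tilde w_n^{\pm}$ converge in $L^2(\mathbb T)$ to solutions of $(D_x+\delta^{\pm})w^{\pm}=0$, i.e. to a constant or to $0$. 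The delicate point is precisely that the frequency cutoffs do not commute with restriction to $\omega$, so the two packets must be extracted separately; their cross term is annihilated by Riemann--Lebesgue because the carrier frequencies differ by $\sim 2\lambda_n^{1/s}\to\infty$, whence $\|w_n\|_{L^2(\omega)}\to0$ forces each limiting constant to vanish on the open set $\omega$, hence to vanish. As the three contributions are asymptotically orthogonal this yields $\|w_n\|_{L^2(\mathbb T)}\to0$, the desired contradiction, and undoing the Floquet--Bloch reduction recovers \eqref{eqn:20}.
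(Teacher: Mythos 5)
Your argument is correct, but it reaches the estimate by a genuinely different route than the paper. The two proofs share their outer layer: the periodization/Floquet--Bloch identity (Lemma \ref{lemma1.3.4}, borrowed from Wunsch) reduces \eqref{eqn:20} to a fiberwise estimate on the torus that must be uniform in the quasimomentum $\theta$ and in $\lambda$. The difference is in how that fiber estimate is proved. The paper computes explicitly: it expands $u$ in sine/cosine series on the interval, inverts the symbol away from the single resonant index $k_0$ using the elementary bound $|k^s-\sigma^s|\sim |k-\sigma|\max(k,\sigma)^{s-1}$ of Lemma \ref{elem}, and controls the exceptional coefficient $u_{k_0}$ by an explicit lower bound for $\int_{-\delta}^{\delta}|u|^2$; the passage from $\alpha=0$ to general $\alpha$ in Lemma \ref{obsfortorus} is then made through Miller's equivalence between resolvent observability and Schr\"odinger observability, via the conjugation $H^s_\alpha=e^{i\alpha\cdot}(-\p_{xx})^{s/2}e^{-i\alpha\cdot}$. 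You instead handle all $(\theta,\lambda)$ at once by a near/far frequency splitting plus a compactness/contradiction argument: the elliptic region produces the weight $\langle\lambda\rangle^{\f{1}{s}-1}$ directly from $\big||\xi|^s-\lambda\big|\gtrsim\lambda^{1-1/s}$ off unit neighborhoods of $\pm\lambda^{1/s}$, and a failing sequence must concentrate on at most one characteristic mode per side, whose demodulated limits are constants killed by the observation on $\omega$, the cross term vanishing by Riemann--Lebesgue since the carriers separate like $2\lambda_n^{1/s}$. Your route avoids Miller's theorem entirely and treats the uniformity in one stroke, at the price of a non-constructive constant; the paper's route gives explicit constants and exhibits the resonant mode concretely, but leans on the observability/resolvent equivalence as a black box. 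Two details to make explicit in a write-up: in the bounded-$\lambda_n$ case, separate the finite-rank spectral projection onto eigenvalues near $\lambda_*$ (whose limit is a nonzero trigonometric polynomial vanishing on the open set $\omega$, hence zero by analyticity) from the complementary part controlled by the resolvent, rather than citing compactness alone; and note that for $\lambda$ below any fixed threshold, in particular $\lambda<0$, the operator is uniformly invertible and \eqref{eqn:20} is immediate, which is how the paper disposes of that regime.
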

Let us explain the idea behind such result. Clearly, the difficult case is when $\la>0$ and large. Since the spectrum, $\si((-\p_{xx})^{s/2})=\si_{a.c.}((-\p_{xx})^{s/2})=[0, \infty)$, we cannot expect 
$[(-\p_{xx})^{s/2}-\la]^{-1}$ to be bounded on $L^2$, and it is not. Instead, \eqref{eqn:20} asserts that such an $L^2$ resolvent bound almost holds (with an additional decay rate of $\la^{\f{1}{s}-1}$, which is important for our purposes), modulo an extra ``control'' term.

The method of proof is to first establish the above estimate on the bounded interval $[-1,1]$. We then use the technique of Wunsch, \cite{wunsch2015periodic}   to extend the result to the real line $\mathbb{R}$.
\subsubsection{Observability on intervals}

 We start with an elementary lemma.
		\begin{lemma}
			\label{elem} 
			Let $s>0$. Then, there exists $d_s, D_s$, so that for every $0<x<y$
	\begin{equation}
	\label{obv} 
		d_s\max(x,y)^{s-1} |x-y|\leq |x^s-y^s|\leq 	D_s\max(x,y)^{s-1} |x-y|.
	\end{equation}
		\end{lemma}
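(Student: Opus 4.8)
The plan is to collapse the two--variable inequality into a single--variable statement by using the homogeneity of $r \mapsto r^s$. Since $0<x<y$ we have $\max(x,y)=y$, so \eqref{obv} is equivalent to the two--sided bound
$$
d_s \;\leq\; \frac{y^s-x^s}{y^{s-1}(y-x)} \;\leq\; D_s .
$$
Introducing $t=x/y\in(0,1)$ and factoring $y^s$ out of the numerator and $y^{s-1}\cdot y$ out of the denominator, the middle quantity reduces to
$$
g(t):=\frac{1-t^s}{1-t},
$$
which no longer depends on $y$. Thus it suffices to find positive constants with $d_s\leq g(t)\leq D_s$ for all $t\in(0,1)$, and these constants will depend only on $s$.

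Next I would study $g$ on $(0,1)$. For every $t\in(0,1)$ both $1-t^s>0$ and $1-t>0$, so $g$ is continuous and strictly positive there. The only point requiring attention is the apparent singularity at the endpoint $t=1$: applying the mean value theorem to $\phi(r)=r^s$ gives $1-t^s=s\,\xi^{s-1}(1-t)$ for some $\xi\in(t,1)$, whence $g(t)\to s$ as $t\to 1^{-}$, while clearly $g(t)\to 1$ as $t\to 0^{+}$. Consequently $g$ extends to a continuous, strictly positive function on the compact interval $[0,1]$, with boundary values $g(0)=1$ and $g(1)=s$.

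The conclusion then follows immediately from compactness: a continuous, strictly positive function on $[0,1]$ attains a positive minimum $d_s:=\min_{[0,1]}g$ and a finite maximum $D_s:=\max_{[0,1]}g$, both depending only on $s$. Substituting back $t=x/y$ and multiplying through by $y^{s-1}(y-x)=\max(x,y)^{s-1}|x-y|$ recovers \eqref{obv}.

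I expect the only genuine point of care to be the removable singularity at $t=1$ (equivalently the limit $x\to y$), where one must confirm that the limit of $g$ exists and is positive, so that $d_s>0$ is not spoiled; away from that endpoint the estimates are routine. I note that a direct mean value theorem argument on $\phi(r)=r^s$ also yields the upper bound quickly, but it forces a case split on the sign of $s-1$ and gives a clumsy lower bound when $s>1$, whereas the scaling reduction above produces the sharp constants $d_s=\min_{[0,1]}g$ and $D_s=\max_{[0,1]}g$ with no case analysis.
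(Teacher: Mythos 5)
Your proof is correct and follows essentially the same route as the paper: both reduce to the single-variable function $\frac{1-t^s}{1-t}$ on $[0,1]$, extend it continuously to the endpoint $t=1$ with value $s$, and take $d_s, D_s$ to be its minimum and maximum. Your write-up is merely more explicit about the removable singularity and the boundary values.
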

		\begin{proof}
		Start with the function $f_s(z)=\frac{1-z^s}{1-z}$, defined for $z\in [0,1]$. Clearly this is a continuous function on $[0,1]$ (defined at $z=1$ via $f(1)=s$), so it has a minimum and maximum, say $d_s, D_s$. That is, 
		$$
		d_s(1-z)\leq 1-z^s\leq D_s(1-z).
		$$
		Without loss of generality $x\leq y$ and apply the previous inequality to $z=\frac{x}{y}$. This shows \eqref{obv}. 
		\end{proof}

\begin{lemma}\label{theo:10}
Let $s>0$. 	Consider the following damped fractional Laplace equation on $[-1,1]$
	\begin{eqnarray}\label{10}
	((-\p_{xx})^{s/2}-\la )u = f, x\in [-1,1]. 
	\end{eqnarray}
	Then for every $\de>0$ there is $C_\de$ so that  
	\begin{eqnarray}\label{20}
	\|u\|_{L^2[-1,1]}\leq C_\de[ <\la>^{\frac{1}{s}-1} \|f\|_{L^2[-1,1]}+ \|u\|_{L^2[-\de,\de]}]
	\end{eqnarray}
	for solutions $u$ of \eqref{10}, where $<\la>:=(1+|\la|^2)^{1/2}$. 
\end{lemma}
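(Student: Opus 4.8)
The plan is to diagonalize \eqref{10} in the Fourier basis of $[-1,1]$ and to split the frequencies into a non-resonant (elliptic) part controlled by $f$ and a resonant part controlled by the observation term $\|u\|_{L^2[-\delta,\delta]}$. Writing $u=\sum_k u_k e^{ik\pi x}$ and $f=\sum_k f_k e^{ik\pi x}$, equation \eqref{10} decouples into $((\pi|k|)^s-\lambda)u_k=f_k$. In the main case $\lambda>0$ large, I set $k_0=\lambda^{1/s}/\pi$ (the real resonant frequency), so that $(\pi|k|)^s-\lambda=\pi^s(|k|^s-k_0^s)$, and apply Lemma \ref{elem} to obtain the two-sided bound $|(\pi|k|)^s-\lambda|\asymp \max(|k|,k_0)^{s-1}\,\big||k|-k_0\big|$. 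I would then call $k$ \emph{bad} if $\max(|k|,k_0)^{s-1}\big||k|-k_0\big|<c\,k_0^{s-1}$ for a small fixed $c=c(s)$, and \emph{good} otherwise; the cases $\lambda\le 0$ and $\lambda$ bounded are trivial, since there is no resonance and $\langle\lambda\rangle^{1/s-1}\asymp 1$.

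The first key step is that the bad set $B$ contains only a bounded number of integers, uniformly in $\lambda$. Since $t\mapsto \max(k_0+t,k_0)^{s-1}|t|$ is monotone on each side of $k_0$ (a one-line calculus check, treating $s\ge 1$ and $0<s<1$ separately), the bad set reduces to the integers lying within $O(1)$ of $\pm k_0$, so $|B|\le M$ for some $M=M(s)$. On the good set the same estimate gives $|(\pi|k|)^s-\lambda|\gtrsim k_0^{s-1}\asymp \langle\lambda\rangle^{1-1/s}$, so by Parseval the elliptic projection $u_G=\sum_{k\in G}u_k e^{ik\pi x}$ satisfies $\|u_G\|_{L^2[-1,1]}\lesssim \langle\lambda\rangle^{1/s-1}\|f\|_{L^2[-1,1]}$.

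The main obstacle is the resonant part $u_B=\sum_{k\in B}u_k e^{ik\pi x}$, for which I want the frequency-independent inequality $\|u_B\|_{L^2[-1,1]}\le C(M,\delta)\,\|u_B\|_{L^2[-\delta,\delta]}$. The difficulty is that although $|B|\le M$, the frequencies can be as large as $\pm k_0$, so a naive norm-equivalence on a fixed finite-dimensional space is unavailable and one needs a constant uniform in arbitrarily large frequencies. This is precisely the content of the Tur\'an--Nazarov inequality for exponential sums: for a sum of at most $M$ exponentials $\sum_j c_j e^{i\omega_j x}$ with arbitrary real $\omega_j$, the $L^2[-1,1]$ norm is bounded by a constant depending only on $M$ and $\delta$ (not on the $\omega_j$) times the $L^2[-\delta,\delta]$ norm. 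Applying this with $\{\omega_j\}=\{k\pi:k\in B\}$ yields the claim with a constant depending only on $s$ and $\delta$.

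Finally I would assemble the pieces. Since $\|u_B\|_{L^2[-\delta,\delta]}\le \|u\|_{L^2[-\delta,\delta]}+\|u_G\|_{L^2[-\delta,\delta]}\le \|u\|_{L^2[-\delta,\delta]}+\|u_G\|_{L^2[-1,1]}$, combining with the two bounds above gives
$$\|u\|_{L^2[-1,1]}\le \|u_G\|_{L^2[-1,1]}+\|u_B\|_{L^2[-1,1]}\lesssim_\delta \langle\lambda\rangle^{1/s-1}\|f\|_{L^2[-1,1]}+\|u\|_{L^2[-\delta,\delta]},$$
which is \eqref{20}. I expect the Tur\'an--Nazarov step to be the crux; if one prefers not to invoke it, the same uniform constant can be produced by factoring $e^{\pm i\lfloor k_0\rfloor\pi x}$ out of the two clusters of $B$ near $\pm k_0$ and running an analyticity/unique-continuation argument on the resulting bounded-frequency envelopes, but the exponential-sum inequality supplies the frequency-independent constant most transparently.
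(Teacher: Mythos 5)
Your argument is correct in substance but follows a genuinely different route from the paper. The paper decomposes $u,f$ into even and odd parts, works with cosine and sine series separately, and arranges matters (via Lemma \ref{elem} and the spacing $|k-\sigma|\ge 1/2$ for $k\ne k_0$) so that there is exactly \emph{one} resonant mode per parity class; the observation term is then extracted by an explicit computation of $\int_{-\delta}^{\delta}|u_{k_0}\cos(\pi k_0 x)+\tilde u|^2\,dx$, using the elementary lower bounds $1+\tfrac{\sin z}{z}>1-\tfrac{2}{\pi}$ and $1-\tfrac{\sin z}{z}\ge c\min(1,z^2)$ to get a frequency-independent constant. You instead keep the full exponential basis, isolate a bad set $B$ of uniformly bounded cardinality near $\pm k_0$, and control the resonant cluster by the Tur\'an--Nazarov inequality. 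Both approaches hinge on the same point --- a lower bound for the mass of a high-frequency resonant piece on $[-\delta,\delta]$ that is uniform in the frequency --- and your good-set estimate matches the paper's elliptic estimate (including the more delicate $0<s<1$ case, which you handle uniformly rather than by the paper's case split). What your route buys is robustness: it tolerates several resonant modes and avoids the even/odd bookkeeping; what it costs is the appeal to a nontrivial external result where the paper stays entirely elementary and self-contained. Two small points to tighten: (i) your claim that bounded $\lambda$ is ``trivial, since there is no resonance'' is not accurate --- for $\lambda=0$ the mode $k=0$ is resonant (and the paper indeed keeps the observation term for $|u_0|$ in the regime $|\lambda|<\pi^s/2^s$) --- but your own bad-set machinery covers this case without change; (ii) when invoking Tur\'an--Nazarov you should state the $L^2$ (or $L^\infty$ plus a frequency-uniform $L^\infty$--$L^2$ comparison, itself a consequence of the same inequality) form explicitly, since the constant's independence of the frequencies $k\pi$, $k\in B$, is exactly the crux.
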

\begin{proof}
	We can always assume that $u,f$ are real, otherwise split in real and imaginary parts. We split the argument in the cases where $f$ is an even function ( in which case $u$ is also even function ) and then when $f$ is an odd function ($u$ odd respectively). \\
{\bf Case I: $u, f$   are  even functions:} 
	For $u, f$ even, we can expend $u$ and $f$ in cosine  series as follows 
	\[
	u=\sum_{k=0}^\infty u_k \cos(k\pi x),~ f=\sum_{k=0}^\infty f_k \cos(k\pi x)
	\]
	In this case, 
	$$
	(-\p_{xx})^{s/2} u(x) = \sum_{k=0}^\infty (\pi k)^s u_k \cos(k\pi x),
	$$
	Assume first that $\la=-\pi^s \si^s, \si>\f{1}{2}$. Then, taking a dot product with $u$ in \eqref{10}, we have 
	$$
	-\la \|u\|^2<\|(-\partial_{xx})^{s/4}u\|^2-\la \|u\|^2=\dpr{f}{u}\leq -\f{\la}{2} \|u\|^2+ \f{C}{|\la|} \|f\|^2
	$$
	Thus, we have better estimate in this case
	\begin{eqnarray}\label{eqn3}
	\|u\|_{L^2}\leq \frac{C}{|\la|}\|f\|_{L^2}
	\end{eqnarray}
	
	Next, let us take $\la=\pi^s \si^s, \si>\f{1}{2}$. Let $k_0=]\si[$, that is, the closest integer to $\si$ using the smaller integer when $\si$ is a half number. 
	Then for every $k\neq k_0$,  we have 
	\begin{equation}
	u_k=\f{1}{\pi^s(k^s-\si^s)} f_k, k\neq k_0.
	\end{equation}
We wish to estimate the function 
$$
\tilde{u}=\sum_{k\neq k_0} u_k \cos(\pi  k x)=u-u_{k_0} \cos(\pi  k_0 x)
$$
first. 
By Lemma \ref{elem}, we have that $ |k^s-\si^s|\sim  |k-\si| \max(k, \si)^{s-1}, k\neq k_0$. \\
{\bf Case I: $s\geq 1$} In this case, we can further take $ |k^s-\si^s|\geq C  |k-\si| \si^{s-1}, k\neq k_0$.  
We have 
	\begin{eqnarray*}
		\|\tilde{u}\|_{L^2}^2=\sum_{k\neq k_0, k\geq 0} |u_k|^2\leq \frac{1}{\pi^{2s}\si^{2s-2}}\sum_{k\neq k_0, k\geq 0}  \f{C}{|k-\si|^2} f_k^2
		\leq \f{C}{\pi^{2s}\si^{2s-2}} \|f\|^2=\f{C}{\lambda^{2-\frac{2}{s}}} \|f\|^2.
	\end{eqnarray*}
	Thus,
	\begin{eqnarray}\label{eq4}
	\|\tilde{u}\|_{L^2}\leq C <\lambda>^{\frac{1}{s}-1} \|f\|_{L^2}
	\end{eqnarray}
	{\bf Case II: $0<s<1$}  In this case, we have 
		\begin{eqnarray*}
			\|\tilde{u}\|_{L^2}^2=\sum_{k\neq k_0, k\geq 0} |u_k|^2\leq \frac{C}{\pi^{2s}}
			\sum_{k\neq k_0, k\geq 0}  \f{\max(k, \si)^{2(1-s)}}{|k-\si|^2} f_k^2. 
		\end{eqnarray*}
	We split the sum in two pieces, $k\in (\si/2, 2\si)$ and the rest. We have 
		\begin{eqnarray*}
			\sum_{k\neq k_0, k\geq 0: k\in (\si/2, 2\si)}  \f{\max(k, \si)^{2(1-s)}}{|k-\si|^2} f_k^2\leq C_s \si^{2(1-s)} 
				\sum_{k\neq k_0, k\geq 0: k\in (\si/2, 2\si)}  \f{1}{|k-\si|^2} f_k^2 \leq C_s \la^{\frac{2}{s}-2} \|f\|_{L^2}^2, 
		\end{eqnarray*}
	since in this case $\max(k, \si)\leq 2\si$ and $\si\sim \la^{\frac{1}{s}}$.  
	
	In the other case, that is $k\leq \si/2\  \textup{or} \  k\geq 2\si$, we have that $|k-\si|\sim \max(k, \si)$, so  
	 \begin{eqnarray*}
	 	\sum_{k\neq k_0, k\geq 0: k\leq \si/2\  \textup{or} \  k\geq 2\si}  \f{\max(k, \si)^{2(1-s)}}{|k-\si|^2} f_k^2 \leq  \sup_{k\leq \si/2\  \textup{or} \  k\geq 2\si}	 	\f{1}{\max(k, \si)^{2s}} \|f\|_{L^2}^2 \leq \frac{1}{\la^2} \|f\|_{L^2}^2.
	 \end{eqnarray*}
	The estimate in this case is exceptionally good, but this is just a small piece of the sum. In all cases, 
	we conclude  \eqref{eq4}.

	Next, we estimate 
	\begin{eqnarray*}
		\int_{-\de}^\de |u(x)|^2 dx &=& \int_{-\de}^\de |u_{k_0} \cos(\pi k_0 x)+\tilde{u}(x) |^2 dx\\ &=& 
		2 |u_{k_0}|^2 \int_0^\de \cos^2(\pi k_0 x) dx+    
		2 \int_{-\de} ^\de u_{k_0} \cos(\pi  k x) \tilde{u}(x) dx + \int_{-\de}^\de |\tilde{u}(x)|^2 dx\\
		&\geq & 
		|u_{k_0}|^2 \de(1+\f{\sin(2\pi k_0 \de)}{2\pi k_0 \de}) - C |u_{k_0}| \|\tilde{u}\|_{L^2}. 
	\end{eqnarray*}
	Note  $(1+\f{\sin(2\pi k_0 \de)}{2\pi k_0 \de})>1-\f{2}{\pi}$, so we can bound   from below
	
	\[
	\int_{-\de}^\de |u(x)|^2 dx \geq \f{\de(1-\f{2}{\pi})}{2} u_{k_0}^2 - C\|\tilde{u}\|_{L^2}^2\geq C_\de u_{k_0}^2 - \f{C}{\la^{2-\frac{2}{s}}} \|f\|^2.
	\]
	Thus, 
	\begin{eqnarray}\label{eqn5}
	u_{k_0}^2 \leq C_{\de}\left(<\lambda>^{\f{2}{s}-2}\|f\|_{L^2}^2+\|u\|_{L^2[-\de,\de]}^2\right).
	\end{eqnarray}
	Hence by combining the estimates \eqref{eq4} and \eqref{eqn5}  , we get
	\[
	\|u\|_{L^2[-1,1]}\leq  C_\de\left(<\la>^{\f{1}{s}-1} \|f\|+ \|u(x)\|_{L^2[-\de,\de]}\right).
	\]
	Lastly, let $-\f{\pi^s}{2^s}<\la<\f{\pi^s}{2^s}$. In this case, we applied  the same arguments as above on
	\[
	u=u_0+\sum_{k=1}^\infty u_k \cos(\pi k x)
	\]
	to get
$	\|\tilde{u}\|_{L^2}  \leq C \|f\|_{L^2}$, while 
	$
	|u_0|^2\leq C_\de \left( \int_{-\de}^\de |u(x)|^2 dx+  \|f\|^2\right).
$
	Finally, we conclude that in all three cases, 
	\[
	\|u\|_{L^2[0,1]}\leq C_\de\left(<\la>^{\f{1}{s}-1} \|f\|_{L^2}+\|u\|_{L^2[-\de, \de]}\right).
	\]
{\bf Case II: $u, f$ are odd functions}
	For $u, f$ odd functions, we can expand $u$ and $f$ in sine series as follows
	\[
	u=\sum_{k=0}^\infty a_k \sin(k\pi x),~ f=\sum_{k=0}^\infty f_k \sin(k\pi x)
	\]
	Again, for $\la<-\f{\pi^s}{2^s}$,  we have the estimate (same as above)
	\[
	\|u\|\leq \f{C}{|\la|} \|f\|. 
	\]
	For $\la=\pi^s \si^ss, ~\si>\f{1}{2}$, we have (same as above in \eqref{eq4})
	\begin{eqnarray*}
		\|\tilde{u}\|_{L^2}\leq C<\la>^{\frac{1}{s}-1} \|f\|.
	\end{eqnarray*}
	where in this case $\tilde{u}=\sum_{k\neq k_0} u_k \sin(\pi  k x)=u-u_{k_0} \sin(\pi  k_0 x)$. 
	Next, we estimate 
	\begin{eqnarray*}
		\int_{-\de}^\de |u(x)|^2 dx &= &\int_{-\de}^\de |u_{k_0} \sin(\pi k_0 x)+\tilde{u}(x) |^2 dx\\& =& 
		2 |u_{k_0}|^2 \int_0^\de \sin^2(\pi k_0 x) dx+   
		2 \int_{-\de} ^\de u_{k_0} \sin(\pi  k x) \tilde{u}(x) dx + \int_{-\de}^\de |\tilde{u}(x)|^2 dx \\
		&\geq & 
		|u_{k_0}|^2 \de(1-\f{\sin(2\pi k_0 \de)}{2\pi k_0 \de}) - C |u_{k_0}| \|\tilde{u}\|_{L^2}. 
	\end{eqnarray*}
	Now, observe $z\to \f{sin(z)}{z}$ can be close to $1$, but in any case, we have 
	$$
	(1-\f{\sin(2\pi k_0 \de)}{2\pi k_0 \de})\geq c \min(1, (k_0\de)^2)\geq c\de^2. 
	$$
	Note that in this last estimate, we used $k_0\geq 1$, so  $c$ is independent on $k_0$! 
Consequently, 
	\[
	\int_{-\de}^\de |u(x)|^2 dx\geq c\de^3 |u_{k_0}|^2- C |u_{k_0}| \|\tilde{u}\|_{L^2}\geq c\de^3 |u_{k_0}|^2- 
	C_\de \|\tilde{u}\|_{L^2}^2 \geq c\de^3 |u_{k_0}|^2 - \f{C_\de}{\la^{2-\f{2}{s}}}\|f\|^2.
	\]
	Hence, 
	$$
	\|u\|_{L^2[-1,1]}^2\leq 2(u_{k_0}^2+ \|\tilde{u}\|_{L^2}^2)\leq C_\de\left(<\la>^{\f{2}{s}-2} \|f\|^2+\int_{-\de}^\de |u(x)|^2 dx\right).
	$$
{\bf  Case III $u, f$ are arbitrary functions}
	In this case, we split $u$ and $f$ in even and odd parts and derive estimates for each of them. Putting it all together, we get
	\begin{eqnarray*}
		\|u\|_{L^2[-1,1]}^2 &=& \|u_{even}\|_{L^2[-1,1]}^2+\|u_{odd}\|_{L^2[-1,1]}^2 \\
		&\leq & C_\de
		\left(\f{\|f_{even}\|^2+\|f_{odd}\|^2}{|\la|^{2-\f{2}{s}}}+ \int_{-\de}^\de (u^2_{even}(x) + u^2_{odd}(x)) dx\right)\\
		&=& C_\de\left(\f{\|f\|^2}{\la^{2-\f{2}{s}}}+ \int_{-\de}^\de u^2(x) dx\right).
	\end{eqnarray*}
	Hence,
	\[
	\|u\|_{L^2[-1,1]}\leq  C_\de\left(\la^{\f{1}{s}-1}\|f\|_{L^2[-1,1]}+ \|u\|_{L^2[-\de,\de]}\right)
	\]
\end{proof}
\noindent This finishes the proof of the observability estimate \eqref{20}. 
Next, we extend Lemma \ref{theo:10} to the whole line $\mathbb{R}$ by using a technique similar to Wunsch, \cite{wunsch2015periodic}.
\subsubsection{Observability on intervals implies observability for a $H_\al$}

 Introduce the   operators
 \[
{\color{red} H_{\alpha}^s:=}[(-i\p_{x}-\alpha)^{2}]^{s/2}~~\text{for}~\alpha\in\mathbb{R}.
 \]
 Equivalently, one may define $H_\al$ through the Fourier transform  
 $$
 \widehat{H^s_\al f}(k)=|k-\al|^s \hat{f}(k).
 $$
Observe the relation 
$$
(-i\p_{x}-\alpha)^{2}=e^{i \al \cdot}(-\p_{xx})  e^{- i \al \cdot}.
$$
Since multiplication by $e^{\pm i \al x}$ is an unitary operator on $L^2[-1,1]$, the relation above is an unitary equivalence between $(-i\p_{x}-\alpha)^{2}$ and $-\p_{xx}$. Consequently, $H^s_{\alpha}$ is a self-adjoint operator, so by Stone theorem, $iH^s_{\alpha}$ generates  a $C_0$-group of unitary operators on a Hilbert space, which we denote  by $U_{\alpha}(t)=e^{it H^s_{\alpha}}$. In addition, and since one can define $g(-\p_{xx})$ for very general functions $g$ (for example $C[0, \infty)$), we have 
\begin{equation}
\label{p:10}
g((-i\p_{x}-\alpha)^{2})=e^{i \al \cdot}g(-\p_{xx}) e^{- i \al \cdot}.
\end{equation}
In particular, applying \eqref{p:10} to the functions $t^{s/2}$ and $e^{i t^{s/2}}$, 
\begin{equation}
\label{p:15}
H^s_\al=e^{i \al \cdot}(-\p_{xx})^{s/2} e^{- i \al \cdot}; \ \ e^{it H^s_{\alpha}} = e^{i \al \cdot}e^{i t H^s_0} e^{- i \al \cdot}.
\end{equation}
The observability estimate for $H^s_{\alpha}$ on   flat torus $\mathbb{T}=\mathbb{R}/\mathbb{Z}$ is as follows. 
\begin{lemma}
	\label{obsfortorus}
	Let $\Gamma\subset \mathbb{T}$ be open and non-empty. For all $\alpha\in [0,1)$, we have
	\begin{eqnarray}\label{estimate1}
	(H^s_{\alpha}-\lambda)u=f\Rightarrow~~\|u\|_{L^2(\mathbb{T})}\leq C\left(<\lambda>^{\f{1}{s}-1}\|f\|_{L^2(\mathbb{T})}+\|u\|_{L^2(\Gamma)}\right)
	\end{eqnarray}
	with constants independent of $\alpha|$ and $|\lambda|\geq 1  \in \mathbb{R}$.
\end{lemma}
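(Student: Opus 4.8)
The plan is to transfer the torus problem to the free fractional Laplacian via the unitary conjugation \eqref{p:15}, and then to rerun the argument of Lemma \ref{theo:10}. The one genuinely new feature is that for $\alpha\neq 0$ the shift destroys the parity symmetry underlying the even/odd splitting in Lemma \ref{theo:10}, so that near a large eigenvalue there are now \emph{two} uncontrolled modes rather than one; controlling both with a single observation term, uniformly in $\alpha$, will be the crux.

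First I would pass to Fourier coefficients. Expanding $u=\sum_{k\in\mathbb{Z}}u_k e^{ik\pi x}$ in the periodic basis on $[-1,1]$, the identity \eqref{p:15} shows that $H^s_\alpha$ acts diagonally with symbol $|k\pi-\alpha|^s$, so that $(H^s_\alpha-\lambda)u=f$ becomes $(|k\pi-\alpha|^s-\lambda)u_k=f_k$; equivalently, setting $v=e^{-i\alpha x}u$ and $g=e^{-i\alpha x}f$ gives $((-\partial_{xx})^{s/2}-\lambda)v=g$, and since $|e^{\pm i\alpha x}|\equiv 1$ every $L^2$ norm in \eqref{estimate1} is preserved. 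I would then analyze the diagonal problem in the shifted frequencies $\mu_k:=|k\pi-\alpha|$, tracking the dependence on $\alpha$. The cases $\lambda\le -1$ (where coercivity of $(-\partial_{xx})^{s/2}\ge 0$ gives the stronger bound $\|u\|\le C\langle\lambda\rangle^{-1}\|f\|\le C\langle\lambda\rangle^{\frac1s-1}\|f\|$) and $1\le\lambda\le\Lambda_0$ bounded (only finitely many low modes near resonance, with uniform constants) are immediate, exactly as in Lemma \ref{theo:10}.

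The substantive case is $\lambda=\pi^s\sigma^s$ with $\sigma$ large. Here $\mu_k=\pi\sigma$ has the two roots $k=\alpha/\pi\pm\sigma$, so I would let $k_0^{\pm}$ be the integers nearest to $\alpha/\pi\pm\sigma$ and split $u=u_{\mathrm{res}}+\tilde u$ with $u_{\mathrm{res}}=u_{k_0^+}e^{ik_0^+\pi x}+u_{k_0^-}e^{ik_0^-\pi x}$. For every $k\neq k_0^{\pm}$ the frequency $\mu_k$ is separated from $\pi\sigma$ by at least $\pi/2$, and the $\mu_k$ are unit-separated, uniformly in $\alpha$. Combining this with Lemma \ref{elem}, which yields $|\mu_k^s-(\pi\sigma)^s|\sim|\mu_k-\pi\sigma|\,\max(\mu_k,\pi\sigma)^{s-1}$, and treating the ranges $s\ge 1$ and $0<s<1$ precisely as in the two cases of Lemma \ref{theo:10}, I obtain the good bound $\|\tilde u\|_{L^2}\le C\langle\lambda\rangle^{\frac1s-1}\|f\|_{L^2}$ with $C=C(s)$.

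It remains to control $u_{k_0^+}$ and $u_{k_0^-}$ by the observation term, and this is the step forcing a new idea. Shrinking $\Gamma$ to a fixed subinterval $I$, I would bound $\int_I|u|^2$ from below. The cross term between the two resonant exponentials is the oscillatory integral $\int_I e^{i(k_0^+-k_0^-)\pi x}\,dx$, which is $O(\lambda^{-1/s})$ because the frequency gap $(k_0^+-k_0^-)\pi\approx 2\pi\sigma=2\lambda^{1/s}$ is large; hence the $2\times 2$ Gram matrix of $\{e^{ik_0^+\pi x},e^{ik_0^-\pi x}\}$ on $I$ tends to $|I|\,\mathrm{Id}$ and is uniformly positive definite for $\sigma$ large, giving $\|u_{\mathrm{res}}\|_{L^2(I)}^2\ge c(|u_{k_0^+}|^2+|u_{k_0^-}|^2)$. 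Absorbing the cross terms with $\tilde u$ by Cauchy--Schwarz together with the bound already proved for $\|\tilde u\|$, this gives $|u_{k_0^+}|^2+|u_{k_0^-}|^2\le C(\langle\lambda\rangle^{\frac2s-2}\|f\|^2+\|u\|_{L^2(\Gamma)}^2)$; adding $\|\tilde u\|^2$ and taking square roots yields \eqref{estimate1}. As $c$ and $C$ depend only on $s$ and $|I|$, the constant is uniform in $\alpha\in[0,1)$ and $|\lambda|\ge 1$. The hard part is precisely this last estimate: the loss of parity means one observation region must recover both near-resonant modes simultaneously, and what rescues the argument—and keeps the constant $\alpha$-independent—is their almost-orthogonality on $\Gamma$, a direct consequence of the large frequency gap $\approx 2\lambda^{1/s}$.
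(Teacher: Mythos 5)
Your proposal is correct, but it takes a genuinely different route from the paper's. The paper does not redo any Fourier analysis for $\alpha\neq 0$: it invokes Miller's equivalence \cite{miller2005controllability} between Hautus-type resolvent conditions and exact observability of the unitary group, notes that Schr\"odinger observability for $H^s_\alpha$ follows from that for $H^s_0$ via the conjugation $e^{itH^s_\alpha}=e^{i\alpha\cdot}e^{itH^s_0}e^{-i\alpha\cdot}$ and $|e^{\pm i\alpha x}|\equiv 1$, and then converts back to the resolvent estimate. You instead rerun the interval argument of Lemma \ref{theo:10} directly on the shifted symbol, replacing the even/odd splitting (one resonant mode per parity sector) by a simultaneous two-mode resonant decomposition at $k_0^{\pm}\approx\alpha/\pi\pm\sigma$, recovered on the observation set through the almost-orthogonality of the two exponentials (frequency gap $\approx 2\lambda^{1/s}$) via a Gram-matrix lower bound. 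Each route buys something: the paper's is short and black-boxes the hard analysis into Lemma \ref{theo:10} plus an abstract theorem, but Miller's equivalence is formulated with $\lambda$-uniform constants, so it is not evident that the round trip preserves the decay factor $\langle\lambda\rangle^{\f{1}{s}-1}$ --- which for $s>1$ makes \eqref{estimate1} strictly stronger than the uniform Hautus condition and is precisely the gain the paper needs later; your direct argument keeps that factor at every step and makes the uniformity in $\alpha$ transparent. Two small inaccuracies in your write-up, neither of which affects the argument: the substitution $v=e^{-i\alpha x}u$ does not map periodic functions to periodic functions (it twists the Floquet boundary condition), though your actual computation is the diagonal one in the coefficients and does not rely on it; and the $\mu_k$ are not mutually unit-separated ($\mu_k$ and $\mu_{-k}$ differ by only $2\alpha$), but your estimate for $\tilde u$ only uses the separation $|\mu_k-\pi\sigma|\geq \pi/2$ for $k\neq k_0^{\pm}$, which does hold.
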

\begin{proof}  
	
	Note that for  $\alpha=0$, we have 
$
	H^s_0=(-\p_{xx})^{s/2},
$
	and in this case the result is proved in Lemma  \ref{theo:10}.   
	Next, assume $\alpha\neq 0$.
	
By the results in  \cite{miller2005controllability} and since $H^s_{\alpha}$ is a self-adjoint operator,  
the estimate \eqref{estimate1} is equivalent to Schr\"odinger observability for $H^s_{\alpha}$. That is, we need to establish that  for every, non-empty $\omega\subset\mathbb{T}$ and every $T>0$, there exist $C(T,\omega)$ such that 
	\[
	\|f\|_{L^2}^2\leq\int_0^T\|e^{itH^s_\al}f\|_{L^2(\omega)}^2~dt
	\]
	Next, fix a non-empty open set $\omega$. By $H^s_0$-observability, we have  for every  $T> 0$  
	\begin{eqnarray*}
		\|f\|_{L^2}^2 &=& \|e^{-i \alpha x}f\|_{L^2}^2 \leq 
		C\int_0^T\|e^{it H^s_0}[e^{-i \alpha \cdot}f]\|^2_{L^2(\omega)}dt=\\
		&=& C\int_0^T\| e^{i \al \cdot} e^{it H^s_0} e^{-i \alpha \cdot}f\|^2_{L^2(\omega)}dt  
	= C\int_0^T\|e^{itH^s_{\alpha}}f\|^2_{L^2(\omega)}dt. 
	\end{eqnarray*}
	This proves the Schr\"odinder observability, with the same constants as $\al=0$. 
	Hence by Theorem 5.1 of  Miller \cite{miller2005controllability} , the estimate (\ref{estimate1}) holds for all $s>0$. 
\end{proof}
\subsubsection{Observability for $H_\al$ implies observability} 
For $g\in \langle x\rangle^{-s} H^{-\infty}(\mathbb{R})$ with $s> 1$. We define the periodization of $g$ as follows
\[
\Pi g(x)=\sum\limits_{n\in \mathbb{Z}}g(x+2\pi n).
\]
Also, for $\alpha\in\mathbb{R}$, we set
\[
\Pi_{\alpha}g=\Pi(e^{i\alpha x}g)
\]
\begin{lemma}\label{lemma1.3.4} For $g\in \langle x\rangle^{-s} H^{-\infty}(\mathbb{R})$ with $s> 1$, we have 
	\begin{eqnarray}
	\|g\|_{L^2(\mathbb{R})}^2=\int_{[0,1)}\|\Pi_{\alpha}g\|_{L^2(\mathbb{T})}^2~d\alpha.
	\end{eqnarray}
	Moreover, if $\Omega\subset\mathbb{R}$ is $2\pi\mathbb{Z}$-invariant and $\Omega_0$ denotes its projection to $\mathbb{T}$, we have 
	
	\begin{eqnarray}
	\|g\|_{L^2(\Omega)}^2=\int_{[0,1)^2}\|\Pi_{\alpha}g\|_{L^2(\Omega_0)}^2~d\alpha.
	\end{eqnarray}
\end{lemma}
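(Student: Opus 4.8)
The plan is to recognize this identity as the Plancherel (unitarity) property of the Bloch--Floquet / Zak transform attached to the lattice $2\pi\mathbb{Z}$, and to prove it by a direct computation resting on orthogonality of characters. First I would write the periodization out explicitly as
\[
\Pi_{\alpha}g(x)=\sum_{n\in\mathbb{Z}}e^{i\alpha(x+2\pi n)}\,g(x+2\pi n),
\]
which is $2\pi$-periodic in $x$, so that $\|\Pi_{\alpha}g\|_{L^2(\mathbb{T})}^2=\int_0^{2\pi}|\Pi_{\alpha}g(x)|^2\,dx$. Expanding the square yields the double sum
\[
|\Pi_{\alpha}g(x)|^2=\sum_{n,m\in\mathbb{Z}}e^{2\pi i\alpha(n-m)}\,g(x+2\pi n)\,\overline{g(x+2\pi m)}.
\]

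Next I would integrate in $\alpha$ over $[0,1)$ first. Since $n-m\in\mathbb{Z}$, the character orthogonality $\int_0^1 e^{2\pi i(n-m)\alpha}\,d\alpha=\delta_{nm}$ collapses the double sum to its diagonal, giving
\[
\int_0^1|\Pi_{\alpha}g(x)|^2\,d\alpha=\sum_{n\in\mathbb{Z}}|g(x+2\pi n)|^2.
\]
Integrating this over one fundamental domain $x\in[0,2\pi)$ and unfolding each term via $\int_0^{2\pi}|g(x+2\pi n)|^2\,dx=\int_{2\pi n}^{2\pi(n+1)}|g|^2$ reassembles $\int_{\mathbb{R}}|g|^2$, which is the first identity. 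The second identity follows from the identical computation, except that the $x$-integration runs over $\Omega_0\subset[0,2\pi)$ instead of the whole fundamental domain; here I would invoke the $2\pi\mathbb{Z}$-invariance of $\Omega$, which gives $\Omega\cap[2\pi n,2\pi(n+1))=\Omega_0+2\pi n$ for every $n$, so unfolding the diagonal sum over $\Omega_0$ recovers precisely $\int_{\Omega}|g|^2$. (I would also correct the apparent typo $\int_{[0,1)^2}$ to $\int_{[0,1)}$.)

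The only genuine obstacle is justifying the interchange of summation with the $\alpha$- and $x$-integrations, i.e. the application of Tonelli/Fubini, and this is exactly where the hypothesis $g\in\langle x\rangle^{-s}H^{-\infty}(\mathbb{R})$ with $s>1$ enters. The decay $|g(x)|\lesssim\langle x\rangle^{-s}$ with $s>1$ makes $\sum_n|g(x+2\pi n)|$ converge locally uniformly (because $\sum_n\langle n\rangle^{-s}<\infty$), so $\Pi_{\alpha}g$ is a bona fide element of $L^2(\mathbb{T})$ and the nonnegative integrand $\sum_n|g(x+2\pi n)|^2$ is integrable over $[0,2\pi)$; Tonelli then legitimizes every rearrangement. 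To treat the distributional ($H^{-\infty}$) component cleanly I would first prove the identity for Schwartz $g$, where all manipulations are manifestly valid, and then extend to the weighted class by density, using that both sides are continuous in the $\langle x\rangle^{-s}H^{-\infty}$ topology. The remaining care is purely bookkeeping of the $2\pi$ normalization: one must read $\|\cdot\|_{L^2(\mathbb{T})}$ as integration over a full period of length $2\pi$ in order for the orthogonality constant to come out to exactly $1$.
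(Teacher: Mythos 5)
Your computation is correct, and it is essentially the only proof: the paper itself gives no argument for this lemma, deferring to Lemma~5 of Wunsch \cite{wunsch2015periodic}, whose proof is exactly the character-orthogonality/unfolding calculation you carry out (the unitarity of the Zak transform for the lattice $2\pi\mathbb{Z}$). Your two editorial observations are also right: $\int_{[0,1)^2}$ is a typo for $\int_{[0,1)}$ (an artifact of copying the $\mathbb{R}^n$ statement, where the integral is over $[0,1)^n$), and $L^2(\mathbb{T})$ must be read as integration over a period of length $2\pi$ for the constants to match, even though the paper elsewhere writes $\mathbb{T}=\mathbb{R}/\mathbb{Z}$. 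One small caution: membership in $\langle x\rangle^{-s}H^{-\infty}$ means $g=\langle x\rangle^{-s}h$ with $h$ in some $H^{-N}$, which does not give the pointwise bound $|g(x)|\lesssim\langle x\rangle^{-s}$ you invoke to sum $\sum_n|g(x+2\pi n)|$; the role of the hypothesis is only to make $\Pi_\alpha g$ well defined as a periodic distribution (the sum converges in $H^{-N}_{\mathrm{loc}}$ by Minkowski's inequality since $\sum_n\langle n\rangle^{-s}<\infty$), while the $L^2$ identities are then obtained, as you indicate, by Tonelli on the diagonalized nonnegative integrand together with a density argument from Schwartz functions, with both sides understood to be simultaneously finite or infinite. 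With that reading your argument is complete.
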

\noindent For the proof of the lemma, we refer to Lemma 5,  \cite{wunsch2015periodic}.

Note that   $((-\p_{xx})^{s/2}-\lambda)u=f$ implies 
\[
e^{ i \al x}((-\p_{xx})^{s/2}-\lambda)e^{- i \al x}[e^{i \al x} u]=e^{ i \al x} f
\]
In terms of the operator  $\Pi$, we get $	(H_{\alpha}-\lambda)(\Pi_{\alpha}u)=\Pi_{\alpha}f$. 
By Lemma (\ref{obsfortorus}), we conclude 
\begin{eqnarray*}
	\|\Pi_{\alpha}u\|_{L^2(\mathbb{T})}^2\leq C(<\lambda>^{\f{2}{s}-2}\|\Pi_{\alpha}f\|_{L^2(\mathbb{T})}^2+\|\Pi_{\alpha}u\|_{L^2(\Omega_0)}^2)
\end{eqnarray*}
By Lemma \ref{lemma1.3.4}, we may integrate both sides over the set $[0,1)$ to obtain
\begin{eqnarray*}
	\|u\|_{L^2(\mathbb{R})}^2\leq C(<\lambda>^{\f{2}{s}-2}\|f\|_{L^2(\mathbb{R})}^2+\|u\|_{L^2(\Omega)}^2)
\end{eqnarray*}
 This is of course \eqref{eqn:20} and so the proof of Theorem \ref{theo:2} is complete.

\subsection{Resolvent estimate}

From the observability estimate above, we prove the following  resolvent estimate for our damped problem.   
\begin{proposition}\label{thm:1.1}
	Assume that $m>0$, $  \ga(x)\geq 0$ and $\ga \in L^\infty$ and there exist $\epsilon>0$ and a $2\pi \mathcal{Z}$- invariant set $\Omega\in\mathbb{R}$ such that $\ga(x)\geq \epsilon$ for a.e. $x\in\mathbb{R}.$ For the equation
	\begin{eqnarray}\label{eqn1.9}
	((-\p_{xx})^{s/2}+m+ik\ga(x)-k^2)u=f
	\end{eqnarray}
	we have the following:
	\begin{itemize}
		\item For $0<s<2~~~$, 
		\begin{equation}
		\|u\|_{L^2(\mathbb{R})}\leq C<k>^{\f{4}{s} - 3}\|f\|_{L^2(\mathbb{R})}, 
		\end{equation}
		\item For $s\geq 2~~~$, 
		\begin{equation}
		\label{30} 
		\|u\|_{L^2(\mathbb{R})}\leq C <k>^{\f{2}{s}-2 }\|f\|_{L^2(\mathbb{R})}.
		\end{equation}
	\end{itemize}
\end{proposition}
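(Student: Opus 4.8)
The plan is to treat \eqref{eqn1.9} as a perturbation of the free fractional resolvent equation and feed it into the observability estimate of Theorem \ref{theo:2}. Writing $\la=k^2-m$ and moving the damping term to the right-hand side, \eqref{eqn1.9} becomes
\[
((-\p_{xx})^{s/2}-\la)u=g,\qquad g:=f-ik\ga(x)u.
\]
For $|k|$ large one has $\la>0$ and $<\la>\sim<k>^2$, so the decay factor supplied by Theorem \ref{theo:2} reads $<\la>^{\f1s-1}\sim<k>^{\f2s-2}$; this is exactly the exponent appearing in the $s\geq 2$ case. The regime of small $|k|$ (where $\la\leq 0$) is easy: there the operator $(-\p_{xx})^{s/2}+m-k^2$ is coercive, bounded below by $m-k^2>0$, so (together with the spectral condition $i\mathbb{R}\subset\rho(\ca)$) the resolvent is bounded uniformly on compact $k$-intervals. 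I would dispatch that case first and then concentrate on $|k|\to\infty$.

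Next I would extract a multiplier (energy) identity by pairing \eqref{eqn1.9} with $u$ in $L^2$. Since $(-\p_{xx})^{s/2}+m-k^2$ is self-adjoint, $\dpr{((-\p_{xx})^{s/2}+m-k^2)u}{u}$ is real, so the only surviving imaginary contribution comes from the damping; taking imaginary parts gives
\[
|k|\int_{\mathbb{R}}\ga(x)|u|^2\,dx=\bigl|\mathrm{Im}\dpr{f}{u}\bigr|\leq\|f\|_{L^2}\|u\|_{L^2}.
\]
This single identity controls both quantities I need. On one hand, since $\ga\geq\epsilon$ on $\Omega$, it yields $\|u\|_{L^2(\Omega)}^2\leq(\epsilon|k|)^{-1}\|f\|_{L^2}\|u\|_{L^2}$, which bounds the control term in Theorem \ref{theo:2}. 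On the other hand, using $\ga^2\leq\|\ga\|_{L^\infty}\ga$, it gives $\|\ga u\|_{L^2}^2\leq\|\ga\|_{L^\infty}|k|^{-1}\|f\|_{L^2}\|u\|_{L^2}$. The decisive point is the gain of a factor $|k|^{-1/2}$ in each of these estimates.

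I would then combine the two ingredients. From $\|g\|_{L^2}\leq\|f\|_{L^2}+|k|\|\ga u\|_{L^2}$ and the bound on $\|\ga u\|_{L^2}$ one gets $|k|\|\ga u\|_{L^2}\lesssim|k|^{1/2}(\|f\|_{L^2}\|u\|_{L^2})^{1/2}$, so Theorem \ref{theo:2} produces
\[
\|u\|_{L^2}\leq C<k>^{\f2s-2}\|f\|_{L^2}+C<k>^{\f2s-\f32}(\|f\|_{L^2}\|u\|_{L^2})^{1/2}+C|k|^{-1/2}(\|f\|_{L^2}\|u\|_{L^2})^{1/2}.
\]
Applying Young's inequality $ab\leq\tfrac12 a^2+\tfrac12 b^2$ to each mixed term splits off a small multiple of $\|u\|_{L^2}$, which I absorb into the left-hand side, plus a multiple of $\|f\|_{L^2}$ carrying the exponents $<k>^{\f4s-3}$ and $<k>^{-1}$ respectively. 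It then remains to compare the three surviving powers $<k>^{\f2s-2}$, $<k>^{\f4s-3}$, $<k>^{-1}$: since $\f4s-3-(\f2s-2)=\f2s-1$, the middle exponent dominates precisely when $s<2$ and is dominated when $s\geq 2$, which reproduces the two stated bounds with the threshold located exactly at $s=2$.

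The main obstacle is the cross term $ik\ga u$ on the right-hand side: estimated crudely as $|k|\|u\|_{L^2}$ it would overwhelm the observability decay and yield no useful bound at all. The whole argument hinges on replacing it by $|k|\|\ga u\|_{L^2}$ and exploiting the $|k|^{-1/2}$ gain coming from the energy identity, then using Young's inequality to push the residual $\|u\|_{L^2}$ back to the left. Beyond this, the only care needed is to verify that the constant from Theorem \ref{theo:2} and the one from the energy identity are uniform in $k$, and to check the moderate-$|k|$ transition where $<\la>\sim<k>^2$ first takes effect.
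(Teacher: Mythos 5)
Your proposal is correct and follows essentially the same route as the paper's proof: move the damping term to the right-hand side, apply the observability/resolvent estimate of Theorem \ref{theo:2} with $\la=k^2-m$ (so $<\la>\sim<k>^2$), control both $\|\ga u\|_{L^2}$ and $\|u\|_{L^2(\Omega)}$ via the imaginary-part energy identity with its $|k|^{-1/2}$ gain, and absorb the mixed terms by Young's inequality to compare the exponents $\f{2}{s}-2$, $\f{4}{s}-3$, $-1$ at the threshold $s=2$. The only cosmetic difference is that you bound $\|u\|_{L^2(\Omega)}$ directly from the energy identity, whereas the paper first passes through $\|u\|_{L^2(\Omega)}\leq \eps^{-1}\|\ga u\|_{L^2}$; the two are equivalent.
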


\begin{proof}
	We begin by pairing  the equation \eqref{eqn1.9} with $u$, taking the real part and using Cauchy inequality. For $|k|\leq k_0= \sqrt{m}/2$, we get
	\begin{eqnarray*}
		\|u\|_{H^{s/2}(\rone)}^2+(m-k^2)\|u\|_{L^2(\rone)}^2&\leq &\|f\|_{L^2(\rone)} \|u\|_{L^2(\rone)}\leq  \frac{\|f\|_{L^2(\rone)}^2}{4(m-k^2)} +(m-k^2)\|u\|_{L^2(\rone)}^2
	\end{eqnarray*}
	This implies that
	\begin{eqnarray*}
		\|u\|_{H^{s/2}(\rone)}
		\leq C\|f\|_{L^2(\rone)}. 
	\end{eqnarray*}
	Next we assume that $|k|>k_0$.
	We apply  Theorem  \ref{theo:2}  to equation (\ref{eqn1.9}) with the damping term on the right-hand side and $\lambda=k^2-m$. Noting that $<\la>\sim <k>^2$,  we get 
	\begin{eqnarray}\label{eqn1.203}
	\|u\|_{L^2(\rone)}&\leq& C\left(<k>^{\f{2}{s}-2}\|f\|_{L^2(\rone)}+<k>^{\f{2}{s}-1}\|\ga(x)u\|_{L^2(\rone)}+
	\|u\|_{L^2(\Omega)}\right). 
	\end{eqnarray}
Choose $\Omega$ to be contained in the set where $\ga\geq \epsilon$ a.e. for some $\epsilon>0$.  We obtain 
	$$
	\|u\|_{L^2(\Omega)}\leq \eps^{-1} \|\ga(x)u\|_{L^2(\rone)},
	$$ 
	so \eqref{eqn1.203} becomes 
	\begin{eqnarray}\label{eqn1.20}
	\|u\|_{L^2(\rone)}&\leq& C\left(<k>^{\f{2}{s}-2}\|f\|_{L^2(\rone)}+(<k>^{\f{2}{s}-1}+\eps^{-1}) \|\ga(x)u\|_{L^2(\rone)}\right). 
	\end{eqnarray}
Pairing the equation (\ref{eqn1.9}) with $u$ and taking the imaginary part, we get for $k\geq k_0$, 
	\begin{eqnarray}
	\|\sqrt{\ga(x)}u\|_{L^2(\mathbb{R})}^2\leq\f{C}{<k>}\|f\|\|u\|
	\end{eqnarray}
	Combining these estimates and observing that $\ga\leq C\sqrt{\ga(x)}$ a.e. yields  
	\begin{eqnarray}
	\|u\|_{L^2(\rone)}\leq C\left(<k>^{\f{2}{s}-2}\|f\|_{L^2(\rone)}+ \f{(<k>^{\f{2}{s}-1}+\eps^{-1})}{<k>^{1/2}} \|f\|_{L^2(\rone)}^{1/2}\|u\|_{L^2(\rone)}^{1/2}\right)
	\end{eqnarray}
	Applying Cauchy-Schwarz, we obtain
	\begin{eqnarray}
	\|u\|_{L^2(\rone)}&\leq& C (<k>^{\f{2}{s}-2} + <k>^{\f{4}{s}-3}+<k>^{-1})  \|f\| 
	\end{eqnarray}
By analyzing the cases $s\in (0,2)$ and $s\geq 2$ separately (here $k$ is large), we finally conclude  
	\begin{eqnarray*}
	\|u\|_{L^2(\rone)}  &\leq &  C <k>^{\f{4}{s}-3 }\|f\|_{L^2(\rone)}, \ \ s\in(0,2) \\
	\|u\|_{L^2(\rone)}  &\leq &  C <k>^{\f{2}{s}-2 }\|f\|_{L^2(\rone)}, \ \ s\geq 2
	\end{eqnarray*}
	This completes the proof. 
\end{proof}

\section{Resolvent estimates and proof of Theorem \ref{thm1.1}}
We begin by recasting (\ref{eq1.1}) as an abstract Cauchy problem. Define $U=(u,u_t)^{T}$, then equation (\ref{eq1.1}) can be written as a dynamical system:
 \[
 U_t=\mathcal{A}U
 \]
 where
 $$
 	\mathcal{A}=\left(\begin{matrix}
 		0&I\\-(-\p_{xx})^{s/2}-m&-\ga(x)\end{matrix}\right),
$$
where  we take $D(\ca)=H^s(\rone)\times H^{s/2}(\rone)$.  The basic Hilbert space  is $\mathcal{H}=H^{s/2}(\rone)\times L^2(\rone)$. The fact that $\ca$ generates a semigroup, under this setup,  is standard. 

Next, we  compute  the resolvent of the operator $\mathcal{A}$. 
	Let $u=(u_1,u_2)^{\prime}$ and $ f=(f_1,f_2)^{\prime}$.  Then 
	\[
	(ikI-\mathcal{A})u=f
	\]
	is equivalent to 
	\begin{eqnarray*}
		iku_1-u_2=f_1\\
		((-\p_{xx})^{s/2}+m)u_1+(ik+\ga(x))u_2=f_2
	\end{eqnarray*}
or 
	\begin{eqnarray*}
		u_1&=&((-\p_{xx})^{s/2}+m+ik\ga(x)-k^2)^{-1}\left((ik+\ga(x))f_1+f_2\right)\\
		u_2&=&i k u_1 - f_1.
	\end{eqnarray*}
	Hence, the resolvent of $\mathcal{A}$ is 
	\begin{align*}
	R(ik,\mathcal{A}) =\left(\begin{matrix}
	R(ik)(ik+\ga(x))&&R(ik)\\\\i k R(ik) (\ga(x)+ik)-I&&i k R(ik) \end{matrix}\right), 
	\end{align*}
	where $R(ik)=((-\p_{xx})^{s/2}+m+ik\ga(x)-k^2)^{-1}$.  Note that 
	$$
	R(ik)^*=R(-ik).
	$$

Recall that our basic resolvent estimate,  Proposition \ref{thm:1.1}, provides bounds for the resolvent 
$R(i k)$, acting as operators on $L^2(\rone)$ into itself.  On the other hand, $R(ik)$ are smoothing operators. 
The next result allows us to obtain bounds between different Sobolev spaces. 
\begin{proposition}
	\label{theo:1.2}
	Let $0<s<2$. Then, 
	\begin{equation}
	\label{eqn1.25}
	\|R(i k)\|_{L^2\to H^{s/2}} + \|R(i k)\|_{H^{-s/2}\to L^2} \leq   C <k>^{\f{4}{s}-2}. 
	\end{equation} 
For $s\geq 2$, 
	\begin{equation}
	\label{eqn1.255}
		\|R(i k)\|_{L^2\to H^{s/2}} + 	\|R(i k)\|_{H^{-s/2}\to L^2} \leq    C <k>^{\f{2}{s}-1}. 
	\end{equation}
\end{proposition}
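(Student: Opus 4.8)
The plan is to deduce both operator-norm bounds from the scalar $L^2\to L^2$ estimate already furnished by Proposition \ref{thm:1.1}, upgrading it in one slot by an elliptic (energy) estimate and in the other by duality. Throughout write $u=R(ik)f$, so that $u$ solves \eqref{eqn1.9}, and abbreviate the $L^2\to L^2$ bound of Proposition \ref{thm:1.1} as $\|u\|_{L^2}\leq C\langle k\rangle^{a}\|f\|_{L^2}$, where $a=\f{4}{s}-3$ when $0<s<2$ and $a=\f{2}{s}-2$ when $s\geq 2$. For $|k|\leq k_0$ the resolvent is bounded on $L^2$ and gains $s/2$ derivatives by the first part of the proof of Proposition \ref{thm:1.1}, so the asserted inequalities hold trivially there; hence I only need to treat $|k|$ large.

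First I would establish the bound on $\|R(ik)\|_{L^2\to H^{s/2}}$. Pairing \eqref{eqn1.9} with $u$ and taking the real part, the damping contributes $\mathrm{Re}\,(ik\langle \ga u,u\rangle)=0$ since $\ga\geq 0$ is real, so
\[
\|(-\p_{xx})^{s/4}u\|_{L^2}^2+m\|u\|_{L^2}^2=\mathrm{Re}\,\langle f,u\rangle+k^2\|u\|_{L^2}^2.
\]
Since $m>0$, the left side controls $\|u\|_{H^{s/2}}^2$ up to a constant, whence $\|u\|_{H^{s/2}}^2\leq C(\|f\|_{L^2}\|u\|_{L^2}+k^2\|u\|_{L^2}^2)$. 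Substituting the $L^2$ bound $\|u\|_{L^2}\leq C\langle k\rangle^a\|f\|_{L^2}$ and keeping the dominant term $k^2\langle k\rangle^{2a}$ for large $k$ yields $\|u\|_{H^{s/2}}\leq C\langle k\rangle^{1+a}\|f\|_{L^2}$. A direct computation gives $1+a=\f{4}{s}-2$ for $0<s<2$ and $1+a=\f{2}{s}-1$ for $s\geq 2$, which is exactly the first summand in \eqref{eqn1.25} and \eqref{eqn1.255}.

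For the second summand I would invoke duality together with the identity $R(ik)^*=R(-ik)$ recorded above. Regarding $R(ik)$ as a map $H^{-s/2}\to L^2$, its adjoint is $R(-ik)\colon L^2\to H^{s/2}$ under the pairings $(L^2)'=L^2$ and $(H^{-s/2})'=H^{s/2}$, so $\|R(ik)\|_{H^{-s/2}\to L^2}=\|R(-ik)\|_{L^2\to H^{s/2}}$. Since $R(-ik)g=\overline{R(ik)\bar g}$ and the $L^2$ norm is conjugation-invariant, $R(-ik)$ satisfies the same scalar estimate of Proposition \ref{thm:1.1} with the same exponent $a$; repeating the energy estimate of the previous paragraph verbatim for $R(-ik)$ gives $\|R(-ik)\|_{L^2\to H^{s/2}}\leq C\langle k\rangle^{1+a}$, and the two bounds combine to \eqref{eqn1.25} and \eqref{eqn1.255}.

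The only genuinely delicate point is the bookkeeping of exponents: the naive combination also produces a term of size $\langle k\rangle^{a}$ (from $\|f\|_{L^2}\|u\|_{L^2}$), and one must verify $2+a>a$, i.e. that the $k^2\|u\|_{L^2}^2$ term dominates, which holds since $2+a>0$ in both regimes, so that no spurious larger power of $\langle k\rangle$ survives. Checking that the $L^2\to L^2$ estimate is stable under $k\mapsto -k$, which is what licenses the duality step, is immediate from the conjugation identity and completes the argument.
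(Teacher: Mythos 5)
Your proposal is correct and follows essentially the same route as the paper: pair the resolvent equation with $u$, take real parts to get $\|u\|_{H^{s/2}}^2\leq C(\|f\|_{L^2}\|u\|_{L^2}+k^2\|u\|_{L^2}^2)$, insert the $L^2\to L^2$ bound of Proposition \ref{thm:1.1}, and obtain the $H^{-s/2}\to L^2$ bound by duality via $R(ik)^*=R(-ik)$. Your write-up is in fact slightly more careful than the paper's on the duality step and the exponent bookkeeping, but the argument is the same.
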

\begin{proof}
	Let $u$ be the solution of 
\begin{eqnarray}
\label{eqn1.24}
((-\p_{xx})^{s/2}+m+ik \gamma(x)-k^2)u=f
\end{eqnarray}
where   $f\in L^2$. 
Taking dot product with $u$ in  (\ref{eqn1.24})   and taking the real part yields 
\begin{eqnarray*}
	\langle {(-\p_{xx})^{s/2}u,u}\rangle+(m-k^2)\langle {u,u}\rangle=\text{Re} \langle {f,u}\rangle\\
	\|u\|_{H^{s/2}}^2\leq \|f\|_{L^2}\|u\|_{L^2}+k^2 \|u\|_{L^2}^2
\end{eqnarray*}
By Proposition \ref{thm:1.1} for $s\in (0,2)$,  $\|u\|_{L^2}\leq C <k>^{\f{4}{s}-3}\|f\|_{L^2}$, so we obtain   
  \begin{eqnarray*}
	\|u\|_{H^{s/2}}^2&\leq&\|f\|_{L^2}\left(<k>^{\f{4}{s}-3}\|f\|_{L^2(\mathbb{R})}\right)+k^2 <k>^{\f{8}{s}-6}\|f\|_{L^2(\mathbb{R})}^2
\end{eqnarray*}
This proves 
$$
\|R(i k)\|_{L^2\to H^{s/2}}\leq  C <k>^{\f{4}{s}-2}, 
$$ and by duality 
$\|R(i k)\|_{H^{-s/2}\to L^2}\leq  C <k>^{\f{4}{s}-2}$. 
For $s\geq 2$, we apply Proposition \ref{thm:1.1} and we similarly obtain  
\begin{eqnarray*}
	\|u\|_{H^{s/2}}^2&\leq&\|f\|_{L^2}\left(<k>^{\f{2}{s}-2}\|f\|_{L^2(\mathbb{R})}\right)+k^2 <k>^{\f{4}{s}-4}\|f\|_{L^2(\mathbb{R})}^2
\end{eqnarray*}
This proves \eqref{eqn1.255}. 
\end{proof}
 Next, we put together the results from Proposition \ref{thm:1.1}, together with Proposition \ref{theo:1.2} to obtain the following result on the composite resolvent $R(i k, \ca)$. 
\begin{proposition}\label{thm1.0}
	For $0<s<2$, there is 
		\begin{equation}
		\label{o:10} 
		\|R(ik,\mathcal{A})\|_{H^{s/2}\times L^2}\leq C <k>^{\f{4}{s}-2}, 
		\end{equation} 
	while for $s\geq 2$ , we have
		\begin{equation}
		\|R(ik,\mathcal{A})\|_{H^{s/2}\times L^2}\leq C.
		\end{equation} 
 
\end{proposition}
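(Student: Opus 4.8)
The plan is to estimate the four entries of the resolvent matrix $R(ik,\ca)$ as operators between the appropriate factors of $\mathcal H=H^{s/2}(\rone)\times L^2(\rone)$, feeding in the $L^2\to L^2$ bound of Proposition \ref{thm:1.1} and the smoothing bounds of Proposition \ref{theo:1.2}. Writing $F=(f_1,f_2)$ and $(u_1,u_2)=R(ik,\ca)F$, recall $u_1=R(ik)[(ik+\ga)f_1+f_2]$ and $u_2=iku_1-f_1$, with $R(ik)=((-\p_{xx})^{s/2}+m+ik\ga-k^2)^{-1}$; set $N=\|F\|_{\mathcal H}$. Since $<k>\sim 1$ for $|k|\leq k_0$ and the resolvent is bounded there (note $m>0$ makes $\ca$ invertible at $k=0$, and Propositions \ref{thm:1.1}--\ref{theo:1.2} control $|k|\leq k_0$), it suffices to treat large $|k|>k_0$. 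Three of the four blocks yield to a direct application of the two previous propositions; the only genuinely delicate term is the top-left block $R(ik)(ik+\ga)\colon H^{s/2}\to H^{s/2}$, and the argument is organized so as to handle it last.

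First I would bound $u_2$ in $L^2$. Using $ik(ik+\ga)=R(ik)^{-1}-[(-\p_{xx})^{s/2}+m]$ one obtains the algebraic identity
\[
u_2=-R(ik)\big[(-\p_{xx})^{s/2}+m\big]f_1+ikR(ik)f_2 .
\]
Because $(-\p_{xx})^{s/2}+m$ maps $H^{s/2}$ boundedly into $H^{-s/2}$, the first term is controlled by the $H^{-s/2}\to L^2$ bound of Proposition \ref{theo:1.2}, while the second is controlled by $|k|$ times the $L^2\to L^2$ bound of Proposition \ref{thm:1.1}; for $0<s<2$ both give $\|u_2\|_{L^2}\leq C<k>^{\f{4}{s}-2}N$. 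The crucial gain then comes from the companion relation $u_1=(u_2+f_1)/(ik)$, which trades a power of $<k>$ for free: $\|u_1\|_{L^2}\leq |k|^{-1}\big(\|u_2\|_{L^2}+\|f_1\|_{L^2}\big)\leq C<k>^{\f{4}{s}-3}N$. This improved $L^2$ control of $u_1$ — strictly better than what a direct application of Proposition \ref{thm:1.1} to $g=(ik+\ga)f_1+f_2$ would give, since $\|g\|_{L^2}\sim<k>N$ — is what makes the final estimate close.

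Finally I would recover $\|u_1\|_{H^{s/2}}$ from an energy identity: pairing $((-\p_{xx})^{s/2}+m+ik\ga-k^2)u_1=g$ with $u_1$ and taking real parts (the $ik\ga$ term being purely imaginary) gives
\[
\|u_1\|_{H^{s/2}}^2\leq k^2\|u_1\|_{L^2}^2+\|g\|_{L^2}\|u_1\|_{L^2}.
\]
Inserting $\|u_1\|_{L^2}\leq C<k>^{\f{4}{s}-3}N$ and $\|g\|_{L^2}\leq C<k>N$, the term $k^2\|u_1\|_{L^2}^2\leq C<k>^{\f{8}{s}-4}N^2$ dominates (as $\f{8}{s}-4>\f{4}{s}-2$ for $s<2$), whence $\|u_1\|_{H^{s/2}}\leq C<k>^{\f{4}{s}-2}N$; together with the bound on $u_2$ this yields \eqref{o:10}. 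The case $s\geq2$ runs identically with the exponents of Propositions \ref{thm:1.1}--\ref{theo:1.2}, where $\f{2}{s}-1\leq0$ forces every factor of $<k>$ to be harmless and keeps the resolvent $O(1)$. The main obstacle, as flagged, is the $H^{s/2}\to H^{s/2}$ block: a naive use of the smoothing estimate on $ikR(ik)f_1$ loses a full power of $<k>$, and it is precisely this two-step device — first extracting sharp $L^2$ control of $u_1$ from the already-estimated $u_2$, then closing with the energy identity — that recovers the claimed exponent.
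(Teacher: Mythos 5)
Your argument is correct and follows essentially the same route as the paper: both rest on the identity $ikR(ik)(ik+\ga)-I=-R(ik)((-\p_{xx})^{s/2}+m)$, the $H^{-s/2}\to L^2$ duality bound from Proposition \ref{theo:1.2}, and an energy pairing that converts the sharpened $L^2$ control of $u_1$ into $H^{s/2}$ control at the cost of a factor $k^2$. The only difference is organizational (you estimate the components $u_1,u_2$ rather than the four matrix entries, and perform the division by $ik$ before rather than after the energy identity), which does not change the substance.
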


\begin{proof}[\bf Proof of Proposition \eqref{thm1.0}]
	
	First we consider the   case $0<s<2$.  
Write  $R(ik,\mathcal{A})$ as follows 
 \begin{eqnarray*} 
 	\left\|R(ik,\mathcal{A})\left(\begin{matrix} f\\g\end{matrix}\right)\right\|_{H^{s/2}\times L^2}
 	&=&\|R(ik)(ik+\ga(x))f\|_{H^{s/2}}+\|R(ik)g\|_{H^{s/2}}+\\&&\|(ik R(ik) (\ga(x)+ik)-I)f\|_{L^2}+
 	\|ik R(ik) g\|_{L^2}
 \end{eqnarray*}
 The estimates for the terms involving $g$ follow easily from the established estimates. Indeed, 
 from \eqref{eqn1.25}, we have 
 $$
 \|R(ik)g\|_{H^{s/2}}\leq C <k>^{\f{4}{s}-2} \|g\|_{L^2}, 
 $$
 while from \eqref{20}, we have 
 $$
 	\|ik R(ik) g\|_{L^2}\leq C |k| <k>^{\f{4}{s}-3} \|g\|_{L^2} \leq  C <k>^{\f{4}{s}-2} \|g\|_{L^2}. 
 $$
 So, it remains to establish the bounds 
 \begin{eqnarray}
 \label{50} 
 & & \|R(is)(ik)(ik+\ga(x))\|=O(|k|^{\f{4}{s}-2}):H^{s/2}\rightarrow H^{s/2} \\
 & & \label{60} 
 \|R(ik)(ik)(\ga(x)+ik)-I)\|=O(|k|^{\f{4}{s}-2}):H^{s/2}\rightarrow L^2. 
 \end{eqnarray}
 Once, \eqref{50} and \eqref{60} are established, we conclude 
 $$
 	\left\|R(ik,\mathcal{A})\left(\begin{matrix} f\\g\end{matrix}\right)\right\|_{H^{s/2}\times L^2}\leq 
 	C \| \left(\begin{matrix} f\\g\end{matrix}\right)\|_{H^{s/2}\times L^2}, 
 $$
 and Proposition \ref{thm1.0} will be proved. 
 
 Next, we estimate $R(ik)(ik)[\ga(x)+ik)]-I:H^{s/2}\rightarrow L^2$.  Elementary manipulations show that  
 \begin{equation}
 \label{80} 
  R(ik)(ik)[\ga(x)+ik)]-I= - R(ik)((-\p_{xx})^{s/2}+m)
 \end{equation}

 Combining \eqref{eqn1.25},  together with the fact that $(-\p_{xx})^{s/2}:H^{s/2}\rightarrow H^{-s/2}$ is continuous, we obtain for $f\in H^{s/2}(\mathbb{R})$ 
 \begin{eqnarray*}
 	&&\|(R(ik)(ik)[\ga(x)+ik)]-I)f\|_{L^2} = \|R(i k)((-\p_{xx})^{s/2}+m)f\|_{L^2} \leq \\
 	& & \leq C|k|^{\f{4}{s}-2}\|((-\p_{xx})^{s/2}+m)f\|_{H^{-s/2}}
 	\leq  C|k|^{\f{4}{s}-2}\|f\|_{H^{s/2}}
 \end{eqnarray*}
 This proves \eqref{60}.

 It remains to estimate  $ \|R(is)(ik +\ga(x))\|_{H^{s/2}\rightarrow H^{s/2}}$. A variant of \eqref{80}reads  
 $$
 R(i k)(i k +\ga(x))=\frac{1}{i k}[I - R(i k)((-\p_{xx})^{s/2}+m)],
$$
Let  $u=R(ik)((-\p_{xx})^{s/2}+m)f$, then
$$
((-\p_{xx})^{s/2}+m+ik\ga(x)-k^2)u=((-\p_{xx})^{s/2}+m)f 
$$
 Pairing  this  equation   with $u$  and taking   real 
 parts and applying Cauchy-Schwarz, we get,
 \begin{eqnarray*}
 	\|(-\p_{xx})^{s/4}u\|_{L^2}^2-(k^2-m)\|u\|_{L^2}^2&\leq& \|((-\p_{xx})^{s/2}+m)f\|_{H^{-s/2}}\|u\|_{H^{s/2}}\\
 	&\leq& \|f\|_{H^{s/2}}\|u\|_{H^{s/2}}.
 \end{eqnarray*}
 Therefore,
 \begin{eqnarray}\label{eqn1.16}
 \|u\|_{H^{s/2}}^2\leq C(k^2 \|u\|_{L^2}^2+ \|f\|_{H^{s/2}}^2).
 \end{eqnarray}
 Next, when we estimate  $\|u\|_{L^2}$,    we used \eqref{eqn1.25}  to get 
 \begin{eqnarray*} 
 \|u\|_{L^2} &=&  \|R(ik)((-\p_{xx})^{s/2}+m)f\|_{L^2}\leq C|k|^{\f{4}{s}-2}\|(-\p_{xx})^{s/2}+m)f\|_{H^{-s/2}} \leq \\
 &  \leq &  C|k|^{\f{4}{s}-2}   \|f\|_{H^{s/2}}
 \end{eqnarray*}
 Plugging this estimate back in \eqref{eqn1.16}, we obtain 
 $
 \|u\|_{L^2}\leq C |k|^{\f{4}{s}-1} \|f\|_{H^{s/2}}. 
 $
As a consequence, 
 \[
 R(ik)((-\p_{xx})^{s/2}+m)=O(|k|^{\f{4}{s}-1}):H^{s/2}(\mathbb{R})\rightarrow H^{s/2}(\mathbb{R}),
 \]
 whence for large $|k|$, 
 \begin{eqnarray*}
 \|R(i k)(i k +\ga(x))\|_{H^{s/2}\to H^{s/2}} &=& k^{-1}\|I- R(ik)((-\p_{xx})^{s/2}+m)\|_{H^{s/2}\to H^{s/2}} \leq \\
 &\leq &  Ck^{-1}(1 + |k|^{\f{4}{s}-1})\leq C |k|^{\f{4}{s}-2},
 \end{eqnarray*}
 which is \eqref{50}.  
Hence, for $0<s<2$, we get 
$$
R(ik, \ca)=(ik-\mathcal{A})^{-1}=O(|k|^{\f{4}{s}-2}): H^{s/2}\times L^2\rightarrow H^{s/2}\times L^2. 
$$ \
Similarly, for $s\geq 2$, we have
$$
R(ik, \ca)=(ik-\mathcal{A})^{-1}=O(|k|^{\f{2}{s}-1}): H^{s/2}\times L^2\rightarrow H^{s/2}\times L^2
 $$
 So, in fact, we have decay in $k$ of the resolvent for $s>2$. 
\end{proof}
 Having proved Proposition \ref{thm1.0}, we are ready for the proof of our main result, Theorem \ref{thm1.1}. 
 For the case $0<s<2$, we apply the  Borichev-Tomilov Theorem \ref{mainthm1} with $\al=\f{4}{s}-2>0$. Then, the semigroup satisfies the following bound 
 $$
 \|e^{t \ca} (\mu-\ca)^{-1}\|_{H^{s/2}\times L^2\to H^{s/2}\times L^2}\leq C t^{-\f{s}{4-2 s}},
 $$
 for any $\mu\in \rho(\ca)$, say $\mu=1$. Equivalently, 
 $$
 \|e^{t\ca} f\|_{H^{s/2}\times L^2}\leq C t^{-\f{s}{4-2 s}} \|(1-\ca) f\|_{H^{s/2}\times L^2}\leq C t^{-\f{s}{4-2 s}}\|f\|_{H^s\times H^{s/2}},
 $$
 since $\ca:H^s\times H^{s/2}\to H^{s/2}\times L^2$. 
 
For  $s\geq 2$,  by Gearhart-Pr\"uss Theorem \ref{Gearhart} the  energy of the damped fractional Klein-Gordon is decaying exponentially and more precisely, we have the bound \eqref{s2}.

The authors are thankful to the anonymous  referee  and to Reinhard Stahn for their  useful comments, which helped to improve the manuscript.


%

\end{document}